\let\over\@@over\makeatother
\numberwithin{equation}{section}
\theoremstyle{plain} 
\newtheorem{theorem}{Theorem}[section] 
\newtheorem{proposition}[theorem]{Proposition} 
\newtheorem{corollary}[theorem]{Corollary}
\newtheorem{lemma}[theorem]{Lemma}
\theoremstyle{remark}
\newtheorem{remark}[theorem]{Remark}
\theoremstyle{definition}
\newcommand{\be}{\begin{equation}}
\newcommand{\ee}{\end{equation}}%
\newcommand{\bse}{\begin{subequations}}
\newcommand{\ese}{\end{subequations}}
\newcommand{\sech}{\operatorname{sech}} 
\newcommand{\kernel}{\operatorname{ker}}
\newcommand{\linspan}{\operatorname{span}}
\newcommand{\id}{\operatorname{id}}
\newcommand{\LB}{\left[}
\newcommand{\RB}{\right]}
\newcommand{\LC}{\left(}
\newcommand{\RC}{\right)}
\newcommand{\R}{\mathbb{R}} 
\newcommand{\placeholder}{\,\cdot\,}
\newcommand{\maps}{\colon}         
\newcommand{\by}{\times}         
\newcommand{\sub}{\subset}         
\newcommand{\n}[2][]{#1\lVert #2 #1\rVert}
\newcommand{\abs}[2][]{#1\lvert #2 #1\rvert}
\newcommand{\bdd}{\mathrm{b}}       
\newcommand{\loc}{{\mathrm{loc}} }     
\newcommand{\odd}{\mathrm{o}}       
\newcommand\F{\mathscr F}
\newcommand{\genU}{\mathcal U}
\newcommand\strainW{\mathcal W}
\newcommand\Xspace{\mathscr X}
\newcommand\Yspace{\mathscr Y}
\newcommand{\cm}{{\mathscr C}}  
\newcommand\flowforce{\mathscr{H}}
\newcommand\limL{\mathscr{L}}
\newcommand\prineigenvalue{\sigma_{0}}
\newcommand{\unstableman}{\mathfrak{W}^{\mathrm{u}}}
\begin{document}

\title{Global bifurcation of anti-plane shear fronts}

\date{\today}

\author[R. M. Chen]{Robin Ming Chen}
\address{Department of Mathematics, University of Pittsburgh, Pittsburgh, PA 15260} 
\email{mingchen@pitt.edu}  

\author[S. Walsh]{Samuel Walsh}
\address{Department of Mathematics, University of Missouri, Columbia, MO 65211} 
\email{walshsa@missouri.edu} 

\author[M. H. Wheeler]{Miles H. Wheeler}
\address{Department of Mathematical Sciences, University of Bath, Bath BA2 7AY, United Kingdom}
\email{mw2319@bath.ac.uk}

\begin{abstract}
  We consider anti-plane shear deformations of an incompressible elastic solid whose reference configuration is an infinite cylinder with a cross section that is unbounded in one direction. For a class of generalized neo-Hookean strain energy densities and live body forces, we construct unbounded curves of front-type solutions using global bifurcation theory. Some of these curves contain solutions with deformations of arbitrarily large magnitude. 
\end{abstract}

\maketitle

\section{Introduction}

Consider an elastic solid whose undisturbed state is an infinite cylinder $\Omega \times\R$ where the coordinates are chosen so that the cross-section $\Omega := \R \times (-{\tfrac{\pi}{2}}, {\tfrac{\pi}{2}})$ lies in the $xy$-plane and the generator parallels the $z$-axis.  For simplicity, suppose that the lateral boundaries at $\{y = \pm\pi/2\}$ are held fixed.  \emph{Anti-plane shear} occurs when the solid is displaced out of the $xy$-plane and the deformation is independent of $z$. This leads to considerable analytical simplification since the full three-dimensional field equations can be reduced to a two-dimensional scalar elastostatic model.   Anti-plane shear is studied in connection to contact mechanics \cite{sofonea2009variational}, rectilinear steady flow of incompressible non-Newtonian fluids \cite{fosdick1973rectilinear}, structures with cracks \cite{paulino1993finite}, and phase transitions in solids \cite{silling1988consequences}, among many other areas.  From a purely mathematical perspective, this model is interesting as it is known to support a rich variety of nontrivial equilibria \cite{horgan1995anti}.  The existence of such solutions on bounded domains has been established by several authors \cite{sofonea2009variational,voss2018again} through variational methods. 

The present work concerns large anti-plane shear {\it fronts}, by which we mean  static equilibria where the displacement has distinct limits as $x \to -\infty$ and $x \to +\infty$.  While investigations of fronts are ubiquitous in the literature of reaction-diffusion equations and mathematical biology, for instance, they are largely unexamined in the context of elastostatics.  Recently, the existence of local curves of fronts lying in a neighborhood of the undisturbed state was proved via center manifold reduction techniques \cite{chen2019center}.  We now use analytic global bifurcation theory to extend these families into the non-perturbative regime.  Ultimately, this furnishes equilibria exhibiting deformation gradients of arbitrary magnitude, sometimes referred to as ``solutions in the large'' \cite{healey2019classical}.    

Global bifurcation has proven to be successful in treating a host of elasticity problems posed on bounded domains  \cite{healey1990symmetry,healey1997unbounded,healey1998global,healey2003global,healey2019classical}. In order to study fronts, though, one must naturally take $\Omega$ to be unbounded in the $x$-direction.   Classical bifurcation theory is ill-adapted to this setting, as it requires certain compactness properties that, at best, are very difficult to verify directly and, at worst, fail outright.  For example, standard hypotheses for degree theoretic global bifurcation are that the nonlinear operator is Fredholm index $0$ and locally proper  \cite{rabinowitz1971some}.  However, the linearized anti-plane shear equation at the undisturbed state in fact fails to be Fredholm.  Local properness of nonlinear elliptic operators on unbounded domains, moreover, is far from assured.  With that in mind, the authors developed a new, general global bifurcation theoretic approach  \cite{chen2018existence,chen2020global} specifically to analyze PDEs set on non-compact domains.  This machinery was then used to construct large-amplitude solitary water waves and hydrodynamics bores.  In part, our objective here is to demonstrate that these results have the potential to address questions of physical relevance in nonlinear elasticity.          
 
\subsection{The model}

\begin{figure}
  \centering
  \includegraphics[scale=1.1]{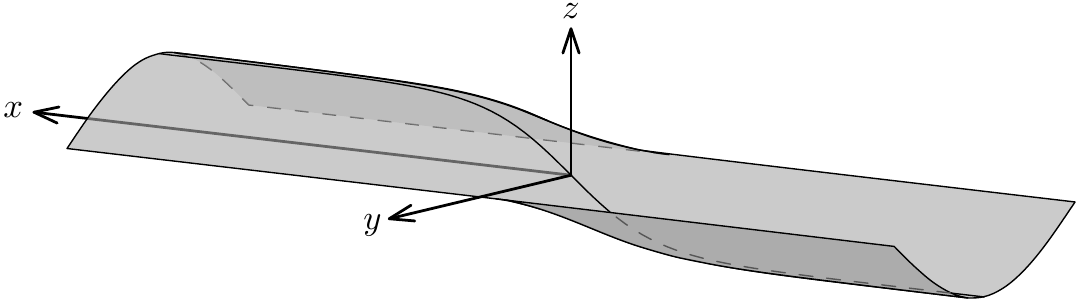}
  \caption{Sketch of a front solution satisfying the symmetry and monotonicity conditions in Theorem~\ref{main theorem}\ref{anti-plane blowup part}. The graph $z=u(x,y)$ is the image of the strip $\{z=0, \abs y < \tfrac{\pi}{2} \}$ under the anti-plane displacement $\id + u(x,y) e_z$.}
  \label{front figure}
\end{figure}
Let us now set down the governing equations.  For anti-plane shear, the displacement must have the form $\id + u(x,y) e_z$, where $e_z$ is the standard basis vector in the $z$-direction; see Figure~\ref{front figure}.  Further supposing the material is isotropic, hyperelastic, and incompressible, the principal invariants of the Cauchy--Green tensor are given by $I_1 = I_2 = 3 + |\nabla u|^2$ and $I_3 = 1$, and the strain energy density becomes $W = W(I_1, I_2)$. 

A characteristic feature of the the anti-plane shear problem is that it is in general overdetermined: the solution to the reduced scalar PDE, called the out-of-plane displacement, must also satisfy the other two in-plane equations. Within the context of both linear and nonlinear elasticity theory, necessary and sufficient admissibility conditions on the strain energy of various elastic materials have been derived which ensure that nontrivial anti-plane shear deformation can be sustained \cite{knowles1976finite,knowles1977note,jiang1991class,tsai1994anisotropic,horgan1994antiplane,pucci2013anti}.
Specifically, it is proved in \cite{knowles1976finite} that, under the so-called ``ellipticity condition'',
\begin{equation}\label{Knowles ellipticity}
{d \over dR} \LB R \left.\LC {\partial W \over \partial I_1}(I_1, I_2) + {\partial W \over \partial I_2}(I_1, I_2) \RC\right|_{I_1 = I_2 = 3 + R^2} \RB > 0 \quad \text{for all }\ R \ge 0,
\end{equation}
the energy function $W$ is admissible if and only if\footnote{It is further remarked in \cite{jiang1991class} that \eqref{Knowles criterion} remains sufficient for admissibility even in the absence of ellipticity \eqref{Knowles ellipticity}.} there exists some constant $k \in \R$ such that
\begin{equation}\label{Knowles criterion}
k {\partial W \over \partial I_1}(I_1, I_2) + (k-1) {\partial W \over \partial I_2}(I_1, I_2) = 0.
\end{equation}
It is also pointed out in \cite{voss2018again} that \eqref{Knowles ellipticity} is equivalent to the convexity of the mapping $\xi \in \R^2 \mapsto W(3+|\xi|^2, 3+|\xi|^2)$, which is assumed in most variational treatments of the problem. 

We restrict attention to generalized neo-Hookean materials \cite{horgan1995anti} with 
\begin{equation*}
  W = \overline{W}(I_1),
\end{equation*}
in which case \eqref{Knowles criterion} is automatically satisfied. Writing the strain energy density as a function of $|\nabla u|^2$ alone, 
\begin{equation}\label{def strain}
  \strainW(|\nabla u|^2) := \overline{W}(3 + |\nabla u|^2),
\end{equation}
the ellipticity condition \eqref{Knowles ellipticity} translates to
\begin{equation}\label{ellipticity on W}
  \strainW'(q) + 2\strainW''(q) q > 0 \quad \text{for }\ q \ge 0.
\end{equation}

Now we consider the system subject to an applied force which depends on the displacement $u$, that is, a ``live'' body force, or a live load \cite[Section 2.7]{ciarlet1988mathematical}. It is easy to see that in order to sustain nontrivial anti-plane shear, the body force can only be applied in the axial direction and needs to be independent of $z$. Following \cite{healey1998global}, we will consider the body force to be parameter dependent and denote the force density by $-b(u, \lambda) e_z$ for some parameter $\lambda$.

A static equilibrium then corresponds to a solution of the quasilinear PDE
\begin{equation}\label{anti-plane shear equation}
  \left\{ 
  \begin{aligned} 
    \nabla \cdot \left( \strainW'(|\nabla u|^2) \nabla u \right) - b( u,\lambda)  & =  0 & &  \text{in } \Omega,   \\
    u & = 0 \quad & & \text{on }  \partial\Omega.
  \end{aligned} 
  \right. 
\end{equation}
Here the homogeneous Dirichlet boundary condition simply means that the cylinder is clamped along its boundary. 
Condition \eqref{Knowles ellipticity} ensures that the PDE \eqref{anti-plane shear equation} is elliptic.  The case where \eqref{ellipticity on W} fails is quite interesting, particularly as it relates crack formation (see \cite[Section 6]{horgan1995anti} and the references therein), but is beyond the scope of the present paper.  

\subsection{Structural assumptions}

We are motivated by the example of a quadratic neo-Hookean material subjected to simple harmonic forcing:
\begin{equation} \label{motivational W and b} 
  \strainW(q) = q + w_1 q^2,
  \qquad 
  b(\varkappa,\lambda) = -(1+ \lambda) \varkappa,
\end{equation}
where here the constant $w_1 > 0$ and the parameter value $\lambda=0$ is ``critical'' in a sense which will be made precise later. Our results, however, apply to a much wider class of materials and forcings which satisfy the following structural conditions.

First we make the symmetry assumption that
\begin{equation} \label{b odd}
b(\placeholder, \lambda) \text{ is odd,} 
\end{equation}
and hence that \eqref{anti-plane shear equation} is invariant under the reflection $u \mapsto -u$. This greatly simplifies the analysis as we are then able to restrict attention to solutions $u$ which are odd in the unbounded variable $x$.

Next, we assume that both $\strainW$ and $b$ are analytic in their arguments. This allows us to use analytic global bifurcation theory, and also to make an expansion near the reference configuration at $\lambda=0$. We require this expansion to have the form
\begin{equation}\label{taylor} 
  \begin{aligned} 
    \strainW(q) & = q + w_1 q^2 + O(|q|^3),  \\  
    b(\varkappa, \lambda) &= -(1+\lambda)\varkappa  + 
    b_2 \varkappa^3  
    +
    O((|\varkappa| + |\lambda|^{1/2})^4),
  \end{aligned}
\end{equation}
where the constants $w_1$ and $b_2$ satisfy the strict inequality
\begin{align}
  \label{assumption on constants}
  b_2 + 2w_1 > 0.
\end{align}
This allows us to use the existence theory in \cite[Section 3]{chen2019center} for small solutions with $0 < \lambda \ll 1$. An expanded version of that result is given below in Section~\ref{small-amplitude section}.  

Finally, we require several global sign conditions. For the strain energy density, we impose the 
``enhanced'' ellipticity condition 
\begin{equation}\label{sufficient ellipticity on W}
  3\strainW''(q) + 2\strainW'''(q) q 
  =
  \big(\strainW'(q) + 2\strainW''(q) q \big)' 
  > 0 \quad \text{for }\ q > 0,
\end{equation}
which implies \eqref{ellipticity on W} since $\strainW'(0) = 1 > 0$ by \eqref{taylor}. For the body force, we suppose that
\begin{equation}\label{cond on b}
  \left\{\ \begin{aligned}
    & \textrm{for all } \lambda \geq 0, ~b(\placeholder, \lambda) \text{ is a strictly decreasing convex function on } (0,\infty) , \\
    & \textrm{for all } \varkappa > 0, ~b(\varkappa, \placeholder) \textrm{ is strictly decreasing and unbounded on } [0,\infty), \\ 
    & b_\varkappa(0, \lambda) < -1 \text{ for } \lambda > 0.
  \end{aligned}\right.
\end{equation}
The first two of these conditions encode the physically intuitive assumption that the magnitude of the body forcing increases as either the displacement or loading parameter is increased.   Together with the previous hypotheses, the inequalities \eqref{sufficient ellipticity on W} and \eqref{cond on b} guarantee that the set of $x$-independent solutions of \eqref{anti-plane shear equation} has a particularly simple structure, and \eqref{sufficient ellipticity on W} is further used to establish an important a priori bound. The conditions in \eqref{cond on b} only concern $\lambda \ge 0$ because, as we will see, the solutions we construct will all satisfy this inequality.  Finally, let us reiterate that the motivational choice of strain energy and body force \eqref{motivational W and b} satisfy all of the above requirements.

\subsection{Main results}

Our main result is the following.

\begin{theorem}[Global bifurcation of anti-plane shear fronts] \label{main theorem} 
  Suppose that the body force and strain energy satisfy the structural conditions \eqref{b odd}--\eqref{cond on b}. There exists a continuous curve $\cm$ of solutions to \eqref{anti-plane shear equation} admitting the $C^0$ parameterization
  \begin{equation*}
    \cm = \left\{ (u(s), \lambda(s)) : 0 < s < \infty  \right \} \subset C_\bdd^{3+\alpha}(\overline{\Omega}) \times (0,\infty)
  \end{equation*}
  with $(u(s),\lambda(s)) \to (0,0)$ as $s \to 0+$, and satisfying the following.
  \begin{enumerate}[label=\rm(\alph*)]
  \item \label{anti-plane monotone part} \textup{(Symmetry and monotonicity)} Each $(u(s), \lambda(s)) \in \cm$ is a strictly increasing monotone front with 
    \begin{equation}
      \begin{aligned} 
        \partial_x u(s) > 0 & \qquad \textup{in } \Omega,  \\
        \partial_y u(s) < 0 & \qquad \textup{for } x, y > 0.
      \end{aligned} \label{monotonicity} 
    \end{equation} 
    Moreover, $u(s)$ is odd in $x$ and even in $y$.
\item \label{anti-plane blowup part} \textup{(Unboundedness)} In the limit $s \to \infty$ we have blowup in that 
  \begin{equation}
   \| \partial_y u(s) \|_{C^0},~ \lambda(s) \longrightarrow \infty \qquad \textup{as } s \to \infty. \label{blowup alternative} 
  \end{equation}
\item \label{anti-plane analytic part} \textup{(Analyticity)} The curve $\cm$ is locally real-analytic.
  \end{enumerate}
\end{theorem}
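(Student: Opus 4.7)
The plan is to recast \eqref{anti-plane shear equation} as a zero set of an analytic nonlinear operator and apply the real-analytic global continuation theorem of \cite{chen2020global}. I would take $\Xspace$ to be the closed subspace of $C_\bdd^{3+\alpha}(\overline{\Omega})$ consisting of functions that vanish on $\dell\Omega$, are odd in $x$, and even in $y$, and let $\Yspace$ be the analogous $C_\bdd^{1+\alpha}$ space. Define $\F\maps \Xspace\by[0,\infty)\to\Yspace$ by
\begin{equation*}
  \F(u,\lambda) := \nabla\cdot\bigl(\strainW'(\abs{\nabla u}^2)\nabla u\bigr) - b(u,\lambda),
\end{equation*}
which is real-analytic by the analyticity of $\strainW$ and $b$; its zeros are precisely the fronts we seek in the prescribed symmetry class.

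The local curve $\cm_{\mathrm{loc}} \subset \cm$ through $(0,0)$, parameterized by $0 < s \ll 1$, is furnished by the small-amplitude theory of Section~\ref{small-amplitude section} (which refines the center-manifold reduction of \cite{chen2019center}) and consists of analytic solutions with $\lambda(s) > 0$, $\partial_x u(s) > 0$ on $\Omega$, and $\partial_y u(s) < 0$ for $x, y > 0$. For the Fredholm hypothesis I would examine $D_u \F(u,\lambda)$ at a nontrivial point of $\cm$: its coefficients converge as $x \to \pm\infty$ to those of a one-dimensional operator linearized at the constant $x$-independent end state of $u$. Invertibility of these asymptotic operators is forced by $b_\varkappa(0,\lambda) < -1$ and the monotonicity/convexity conditions in \eqref{cond on b}, which make their principal eigenvalues strictly positive. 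Combined with the uniform ellipticity \eqref{ellipticity on W}, standard results for elliptic operators on cylindrical domains then give Fredholmness of index zero, and local properness follows from a diagonal extraction paired with Schauder estimates.

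Preservation of the strict nodal inequalities \eqref{monotonicity} along any candidate continuation I would establish by a sliding argument applied to $\partial_x u$ (and, exploiting $y$-evenness and the zero Dirichlet datum, to $\partial_y u$ on $\{y > 0\}$); both differentiated equations are uniformly elliptic linear PDEs to which the strong maximum principle and Hopf lemma apply. Invoking \cite{chen2020global} then produces the global analytic curve $\cm$ and shows that one of three possibilities occurs: (a) $\n{u(s)}_{\Xspace} + \lambda(s) \to \infty$; (b) $\cm$ forms a closed loop; or (c) the nodal inequalities \eqref{monotonicity} degenerate in the limit. Openness of \eqref{monotonicity} together with the strong maximum principle excludes (c); uniqueness of the analytic germ at $(0,0)$, together with $\lambda(s) > 0$ for small $s$ and the isolation of $(0,0)$ among solutions at $\lambda=0$, excludes (b). Hence (a) holds.

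To refine (a) into \eqref{blowup alternative}, I would first exploit \eqref{cond on b} to show that for each $\lambda \ge 0$ the $x$-independent solutions of \eqref{anti-plane shear equation} form an explicit monotone family that majorizes $u$ by the maximum principle, thereby bounding $\n{u}_{C^0}$ by a continuous function of $\lambda$. The enhanced ellipticity \eqref{sufficient ellipticity on W} then supplies a Bernstein-type a priori bound for $\abs{\nabla u}$, which Schauder estimates lift to a bound on $\n{u}_{\Xspace}$. Consequently $\lambda(s)$ cannot remain bounded along $\cm$, so it must diverge; the residual blowup in $\Xspace$ is localized to $\partial_y u$ on the boundary by the $x$-oddness and the vanishing of the tangential gradient on $\dell\Omega$, yielding $\n{\partial_y u(s)}_{C^0} \to \infty$. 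Analyticity of $\cm$ is built into the abstract framework. The main technical obstacle is the Fredholmness and local properness on the non-compact cylinder, whose failure in the classical setting is exactly what the cylindrical-asymptotics framework of \cite{chen2020global} is designed to circumvent.
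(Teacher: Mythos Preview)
Your outline misidentifies the alternatives that the abstract theorem of \cite{chen2020global} actually produces. The three possibilities are not blowup, closed loop, and nodal degeneracy; they are blowup, \emph{heteroclinic degeneracy}, and \emph{spectral degeneracy} (with ``not a closed loop'' established separately). You have omitted both of the latter, and each requires a substantial argument that your plan does not supply.

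Heteroclinic degeneracy means that after translating by $x_n\to\pm\infty$ the fronts converge to a new front $u_*$ whose limiting states, together with those of the original sequence, form three distinct pairwise-conjugate $x$-independent solutions. Ruling this out is the purpose of the conjugate-flow analysis in Section~\ref{conjugate section}: one must show that the only conjugate pair of signed transverse solutions is $\{U_+(\lambda),U_-(\lambda)\}$, and that neither is conjugate to $0$. This hinges on the conserved quantity $\flowforce$ and the monotonicity of the period map $P(c,\lambda)$, which in turn uses the enhanced ellipticity \eqref{sufficient ellipticity on W}. None of this appears in your proposal.

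Spectral degeneracy means $\prineigenvalue^\pm(u(s_n),\lambda(s_n))\to 0$ along a bounded subsequence (incidentally, the correct sign condition is $\prineigenvalue^\pm<0$, not positive). Excluding this requires showing that $\ker\limL'_\pm(u,\lambda)$ is trivial for every strictly monotone front with $\lambda>0$; in the paper this is Lemma~\ref{trivial kernel lemma}, proved via a shooting argument tied to the strict monotonicity of the period map. Your appeal to $b_\varkappa(0,\lambda)<-1$ and convexity gives non-degeneracy only at the \emph{trivial} limiting state, not at $U_\pm(\lambda)$.

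Finally, your derivation of $\n{\partial_y u(s)}_{C^0}\to\infty$ is not right: once $\lambda(s)\to\infty$ there is no ``residual blowup'' to localize. The paper instead proves directly (Lemma~\ref{uy blowup lemma}) that $\n{\partial_y U_\pm(\lambda)}_{C^0}\to\infty$ as $\lambda\to\infty$, using the first integral $H$ and the unboundedness of $b(\varkappa,\cdot)$ in \eqref{cond on b}; the front then inherits this blowup from its limiting states.
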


Figure~\ref{front figure} shows a sketch of a front satisfying the symmetry and monotonicity conditions in \ref{anti-plane monotone part}.

\begin{remark}\label{reflection remark}
  Since \eqref{anti-plane shear equation} is invariant under reflections in $x$, one can reflect each of the solutions in $\cm$ to obtain a curve of strictly \emph{decreasing} monotone fronts with the inequalities in \eqref{monotonicity} reversed.
\end{remark}

\begin{remark}
One can relax many of these hypotheses at the cost of additional ambiguity regarding the limiting behavior along $\cm$.  For example, the assumption that $b(\varkappa, \placeholder)$ is unbounded on $[0,\infty)$ is used only in Lemma~\ref{uy blowup lemma}.  Without it, we would still have that $\lambda(s) \to \infty$, but not necessarily the blowup of $\partial_y u(s)$.  On the other hand, if \eqref{sufficient ellipticity on W} does not hold globally, then it may happen that the system loses ellipticity in the limit.  This scenario is of particular interest to crack formation but unfortunately is difficult to treat through our methodology.  In particular, it could coincide with any of the alternatives in Section~\ref{abstract bifurcation section}.   We also rely on \eqref{sufficient ellipticity on W} in our study of the conjugate flow problem in Section~\ref{conjugate section}.  If it does not hold, then there may exist multiple $x$-independent solutions of \eqref{anti-plane shear equation} that are conjugate in the sense of \eqref{def conjugate flows}.  Were this to occur, then we must allow for the possibility that a heteroclinic degeneracy develops in the limit along $\cm$; see Theorem~\ref{abstract global bifurcation theorem}\ref{gen hetero degeneracy}.  

Under the opposite sign conditions on the coefficients in \eqref{assumption on constants}, there exist spatially localized anti-plane shear equilibria for which $u(x,\placeholder)$ vanishes in the limits $x \to \pm\infty$.  Curves of small solutions of this form were also obtained in \cite{chen2019center}.  In a forthcoming paper, Hogancamp \cite{hogancamp2020broadening} continues these families globally by means of the general theory in \cite{chen2018existence}.  Unlike \eqref{blowup alternative}, he finds that the solutions broaden while remaining uniformly bounded in $C^{k+\alpha}$, for all $k \geq 0$.
\end{remark}

\begin{remark}
  Observe that while \eqref{blowup alternative} implies the existence of a monotone front solution for all $\lambda > 0$,  it does not give uniqueness.  Indeed, there may be many turning points along the global bifurcation curve.  There can also be secondary bifurcations resulting in branches of solutions not captured by Theorem~\ref{main theorem}.  On the other hand, $\cm$ is maximal among all locally analytic curves of increasing monotone front solutions containing the reference state $(u,\lambda) = (0,0)$.  Moreover, the curve comprises \emph{all} such solutions in a neighborhood of $(u,\lambda)=(0,0)$ in the sense that every sufficiently small front is an element of $\cm$ up to translation and reflection.
\end{remark}

\begin{remark}
  Here we are taking the regularity of $u$ to be slightly better than the classical one.  This choice simplifies the maximum principle arguments used to establish \eqref{monotonicity}. In fact, since $\strainW$ and $b$ are real analytic, a simple bootstrapping argument using elliptic theory shows that $u$ is $C_\bdd^{k+\alpha}$ for any $k \geq 0$ as soon as it is $C_\bdd^{2}$.  
\end{remark}

The rest of the paper is organized as follows. In Section \ref{preliminaries section}, we define the function spaces suited for the study of front-type solutions and the limiting linearized operators. In Section \ref{conjugate section}, we consider the so-called conjugate flow problem for the system.  This is crucial to the global continuation argument as it will allow us to characterize (and then rule out) the loss of compactness.  Next, in Section \ref{small-amplitude section}, we briefly recapitulate the small-amplitude existence result from \cite[Section 3]{chen2019center} and prove some additional facts.  Section~\ref{nodal section} is then devoted to establishing that certain monotonicity properties are preserved along closed sets of solutions extending these local curves.  Finally, these components are assembled in Section \ref{global section} to give the proof of Theorem \ref{main theorem}.  

\section{Preliminaries} \label{preliminaries section} 

\subsection{Function spaces}
We begin by setting down a functional analytic framework.  First, define 
\[ \Xspace  := \left\{ u \in C^{3+\alpha}(\overline{\Omega}) : u|_{\partial\Omega} = 0 \right\}, \qquad 
\Yspace  :=  C^{1+\alpha}(\overline{\Omega}).\]
Note that the elements of $\Xspace$ and $\Yspace$ are only locally H\"older continuous; we denote the corresponding space of uniformly H\"older continuous functions by $\Xspace_\bdd$ and $\Yspace_\bdd$, respectively.  We also will append a subscript of ``o'' to indicate the subspace of functions that are odd in $x$ and even in $y$. Finally we denote $\Xspace'$ be the subspace of $\Xspace$ consisting of functions independent of $x$, and likewise $\Yspace'$.  The PDE \eqref{anti-plane shear equation} can then be recast as the abstract operator equation
\[ \F(u,\lambda) = 0,\]
where $\F$ is a real-analytic mapping $\Xspace_\bdd \times \R \to \Yspace_\bdd$ and $\Xspace_{\bdd,\odd} \times \R  \to \Yspace_{\bdd,\odd}$.  

To study fronts, we follow the strategy of \cite{chen2020global} and conduct most of the work in the spaces 
\begin{equation}
  \begin{aligned}
    \Xspace_\infty & := \Big\{ u \in \Xspace_{\bdd,\odd} :  \lim_{x \to \pm\infty} \partial^\beta u \textrm{ exists for all } |\beta| \leq 3 \Big\}, \\
    \Yspace_\infty & := \Big\{ f \in \Yspace_{\bdd,\odd} : \lim_{x \to \pm\infty} \partial^\beta f \textrm{ exists  for all } |\beta| \leq 1 \Big\},
  \end{aligned}
  \label{infinity spaces definition} 
\end{equation}
with all the above limits are uniform in $y$.   Intuitively, $\Xspace_\infty$ is the largest closed subspace of $\Xspace_{\bdd}$ containing all front-type solutions with the desired symmetry properties (see, \cite[Lemma 2.3]{chen2020global}).  Since the solutions we construct will all have $\lambda > 0$, it is convenient to introduce the open set 
\begin{equation}
  \label{def U set}
  \mathcal{U}_\infty := \Xspace_\infty \times (0,\infty).  
\end{equation}
One can easily confirm that $\F \maps \mathcal{U}_\infty \to \Yspace_\infty$ is real analytic.

\subsection{Limiting operators and their principal eigenvalues}

If $(u,\lambda) \in \Xspace_\infty \times \R$ is a front-type solution of \eqref{anti-plane shear equation}, then the Fr\'echet derivative $\F_u(u,\lambda)$ is a linear elliptic operator whose coefficients have well-defined point-wise limits as $x \to \pm\infty$.  Taking those limits and restricting the domain to $x$-independent functions yields the so-called \emph{transversal linearized operators} at $x=-\infty$ and $x=+\infty$:
\begin{equation*}
  \limL_\pm^\prime(u,\lambda)\maps  \Xspace' \to \Yspace'
  \qquad v \mapsto \lim_{x \to \pm\infty} \F_u(u,\lambda) v. 
\end{equation*}
In our setting, these will be Sturm--Liouville-type ODE operators posed on the interval $\Omega' := (-\frac\pi 2, \frac\pi 2)$, and hence they possess principal eigenvalues that we denote by $\prineigenvalue^\pm(u,\lambda)$.  Recall that the principal eigenvalues bound the remainder of the spectrum from above and are characterized by the corresponding eigenfunction being strictly positive on $\Omega'$.  The problem here is self-adjoint, so the entirety of the spectrum lies on the real axis.   It is also worth noting that, if $u$ is odd in $x$, then the structure of the equation \eqref{anti-plane shear equation} implies that $\prineigenvalue^-(u,\lambda) = \prineigenvalue^+(u,\lambda)$.

\section{Conjugate flows}\label{conjugate section}

If  $(u,\lambda)$ is a front, then its limiting states $\lim_{x \to \pm\infty} u(x, \placeholder)$ are themselves $x$-independent solutions of the PDE \eqref{anti-plane shear equation}. This leads us to study the ODE 
\begin{equation}
 \label{U ODE}
  \left\{ \begin{aligned}
    \partial_y \big( \strainW'(U_y^2) U_y \big) - b(U, \lambda) & = 0 & \qquad & \textrm{in } \Omega^\prime = (-{\tfrac{\pi}{2}}, {\tfrac{\pi}{2}}),  \\
    U & = 0 & \qquad & \textrm{on } \partial\Omega^\prime = \left\{ y = \pm \tfrac{\pi}{2} \right\}.
  \end{aligned} \right.
\end{equation}
Given two solutions to the above ODE, a natural question is whether they can be connected by a front solution of the full anti-plane shear system. In this section we give a partial answer to this question,  which will be needed at several key points in the global bifurcation theoretic argument leading to Theorem~\ref{main theorem}.  

Our main tool is a conserved quantity of the full system.  Naturally, the anti-plane shear model \eqref{anti-plane shear equation} carries a variational structure in that it is formally given by 
\begin{equation}\label{var structure}
\delta \int_\Omega \mathcal L(u, \nabla u, \lambda) \,dx \, dy = 0
\end{equation}
with suitable boundary conditions on the admissible variations, and where the Lagrangian density
\begin{equation*}
  \mathcal L(u, \nabla u, \lambda) := {1\over 2} \strainW(|\nabla u|^2) + \mathcal{B}( u,\lambda).
\end{equation*}
Here,
\begin{equation*}
  \mathcal{B}(\varkappa,\lambda) := \int_0^\varkappa b(\tilde\varkappa,\lambda)\, d\tilde\varkappa
\end{equation*}
is the primitive of $b$. Note that \eqref{b odd} then implies that $\mathcal{B}(\placeholder, \lambda)$ is even.

Now, for $(u,\lambda) \in \Xspace_\bdd \times \R$, define the functional
\begin{equation*}
\begin{split}
  \flowforce(u, \lambda;x) &:= \int_{-\frac{\pi}{2}}^{\frac{\pi}{2}} \Big( \mathcal L(u,\nabla u,\lambda) - \mathcal L_\xi(u, \nabla u,\lambda)u_x \Big)\, dy
  \\&\phantom{:}= \int_{-\frac{\pi}{2}}^{\frac{\pi}{2}} \left( {1\over2}\strainW(|\nabla u|^2) - u_x^2 \strainW'(|\nabla u|^2) + \mathcal{B}(u, \lambda) \right)\, dy.
\end{split}
\end{equation*}
By \cite[Lemma 3.1]{chen2020global}, $\flowforce(u,\lambda;x)$ is independent of $x$ provided $(u,\lambda)$ is a solution to \eqref{anti-plane shear equation}.  This can of course also be verified by a direct calculation, and should be understood as a consequence of the variational structure \eqref{var structure} and the equation's translation invariance in $x$.  Following \cite[Definition 3.3]{chen2020global}, we call two distinct functions $U_\pm \in C^2(\Omega^\prime)$ {\it conjugate flows} if 
\begin{equation}
  \label{def conjugate flows}
  \F(U_-,\lambda) = \F(U_+,\lambda) = 0, \quad \textrm{and} \quad \flowforce(U_+, \lambda) = \flowforce(U_-, \lambda).
\end{equation}
That is, $(U_\pm,\lambda)$ both solve \eqref{U ODE} and are on the same level set of the conserved quantity $\flowforce$.  Naturally, being conjugate flows is a necessary condition for $U_+$ and $U_-$ to represent the limiting states of a front solution to \eqref{anti-plane shear equation}.

\begin{proposition}[Conjugate flows] \label{conjugate flow proposition} Assume that \eqref{b odd}, \eqref{sufficient ellipticity on W}, and \eqref{cond on b} hold. 
  \begin{enumerate}[label=\rm(\alph*)]
  \item \label{conjugate positive part} If $\lambda > 0$, there are exactly three solutions of the ODE \eqref{U ODE} which do not change sign: the unique positive solution $U=:U_+(\lambda)$, its reflection $U_-(\lambda) := -U_+(\lambda)$, and the trivial solution $U\equiv 0$.
  \item \label{conjugate uniqueness part} $U_+(\lambda)$ and $U_-(\lambda)$ are conjugate in the sense of \eqref{def conjugate flows}, but neither of them is conjugate to $U\equiv 0$.
  \item \label{conjugate zero part} If $\lambda = 0$, then the unique solution of \eqref{U ODE} is the trivial solution $U \equiv 0$.
  \end{enumerate}
\end{proposition}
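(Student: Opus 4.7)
The plan is to prove the three parts in the order \ref{conjugate zero part}, \ref{conjugate positive part}, \ref{conjugate uniqueness part}, exploiting that the ODE \eqref{U ODE} is autonomous in $y$ and carries a conserved Hamiltonian structure.

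For \ref{conjugate zero part}, I would multiply the ODE at $\lambda=0$ by $U$ and integrate over $\Omega^\prime$; integration by parts using the Dirichlet condition yields
\[
  \int_{\Omega^\prime} U_y^2\,\strainW'(U_y^2)\,dy = -\int_{\Omega^\prime} U\, b(U,0)\,dy.
\]
The enhanced ellipticity \eqref{sufficient ellipticity on W} together with $\strainW'(0)=1$ implies (e.g.\ via $(\sqrt q\,\strainW'(q))' = \frac{1}{2\sqrt q}(\strainW' + 2q\strainW'')$ and integrating from $0$) that $\strainW'(q) > 1$ for $q>0$, so $p^2\strainW'(p^2) > p^2$ for $p\ne 0$. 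Meanwhile, convexity of $b(\,\cdot\,,0)$ combined with $b_\varkappa(0,0)=-1$ from \eqref{taylor} gives the tangent-line bound $b(\varkappa,0)\ge -\varkappa$ on $[0,\infty)$, whence $-U b(U,0)\le U^2$ for every $U\in\R$ (oddness of $b$ handles $U<0$). The sharp Poincar\'e inequality $\int U_y^2 \ge \int U^2$ on $H^1_0(\Omega^\prime)$ (first Dirichlet eigenvalue $1$, eigenfunction $\cos y$) then chains to
\[
  \int_{\Omega^\prime}\! U^2\,dy \ge -\int_{\Omega^\prime}\! U b(U,0)\,dy = \int_{\Omega^\prime}\! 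U_y^2\,\strainW'(U_y^2)\,dy \ge \int_{\Omega^\prime}\! U_y^2\,dy \ge \int_{\Omega^\prime}\! U^2\,dy,
\]
with the middle inequality strict unless $U_y\equiv 0$, forcing $U\equiv 0$.

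For \ref{conjugate positive part}, the ODE conserves $H(U,U_y) := K(U_y) - \mathcal{B}(U,\lambda)$, where $K(p) := p^2\strainW'(p^2) - \tfrac12\strainW(p^2) + \tfrac12\strainW(0)$ is even, strictly increasing on $[0,\infty)$, with $K(0)=0$. A non-sign-changing nontrivial solution is even in $y$ (by reflection symmetry of the ODE and IVP uniqueness) and is determined by $M:=U(0)>0$; the energy identity $|U_y| = K^{-1}_+(\mathcal{B}(U,\lambda)-\mathcal{B}(M,\lambda))$ then gives the time map
\[
  T(M,\lambda) = \int_0^M \frac{dU}{K^{-1}_+(\mathcal{B}(U,\lambda)-\mathcal{B}(M,\lambda))},
\]
and a positive Dirichlet solution corresponds to $T(M,\lambda)=\pi/2$. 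Linearisation gives $T(M,\lambda)\to \pi/(2\sqrt{-b_\varkappa(0,\lambda)}) < \pi/2$ as $M\to 0^+$ by \eqref{cond on b}; convexity of $b$ makes the effective restoring rate $-b_\varkappa(U,\lambda)$ decrease with $U$, so $T(\cdot,\lambda)$ grows, and a Brezis--Oswald-type identity built from the fact that $\varkappa\mapsto b(\varkappa,\lambda)/\varkappa$ is non-decreasing (a consequence of convexity with $b(0,\lambda)=0$) rules out two distinct positive solutions. Combined with the small-amplitude bifurcation from $\lambda=0$ recalled in Section~\ref{small-amplitude section} and continuation, this produces a unique $U_+(\lambda)$ for every $\lambda>0$. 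The negative solution is $U_-=-U_+$ by the symmetry $U\mapsto -U$ from oddness of $b$, giving exactly the three non-sign-changing solutions.

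For \ref{conjugate uniqueness part}, the equality $\flowforce(U_+,\lambda)=\flowforce(U_-,\lambda)$ is immediate, as $\strainW(U_y^2)$ and $\mathcal{B}(U,\lambda)$ are both even in $U$. For non-conjugacy with the trivial solution, I would differentiate in $\lambda$: since $U_+(\lambda)$ is a critical point of the action $\flowforce(\,\cdot\,,\lambda)$ and $U_+(\pm\pi/2,\lambda)\equiv 0$, the variation $(U_+)'(\lambda)$ contributes nothing, leaving
\[
  \frac{d}{d\lambda}\bigl[\flowforce(U_+(\lambda),\lambda) - \flowforce(0,\lambda)\bigr] = \int_{\Omega^\prime} \mathcal{B}_\lambda(U_+(\lambda),\lambda)\,dy < 0,
\]
the strict negativity following because $b(\varkappa,\,\cdot\,)$ is strictly decreasing on $[0,\infty)$, so $b_\lambda(s,\lambda)<0$ off a measure-zero set by analyticity, giving $\mathcal{B}_\lambda(U_+(y),\lambda)<0$ for $y\in(-\pi/2,\pi/2)$. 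Uniqueness and continuity of $U_+(\lambda)$ combined with part \ref{conjugate zero part} force $U_+(\lambda)\to 0$ as $\lambda\to 0^+$ (any limit would have to be a non-negative solution at $\lambda=0$, hence zero), so the flow-force difference vanishes at $\lambda=0$; integrating the strictly negative derivative yields $\flowforce(U_+,\lambda)<\flowforce(0,\lambda)$ strictly for every $\lambda>0$.

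The hardest step will be the uniqueness statement in \ref{conjugate positive part}: convexity provides the sublinear structure $\varkappa\mapsto b(\varkappa,\lambda)/\varkappa$ non-decreasing, but converting this into strict monotonicity of $T(\cdot,\lambda)$ — or adapting a Picone- or Brezis--Oswald-type integral identity to the quasilinear operator $\partial_y(\strainW'(U_y^2)U_y)$ — takes some care, and the companion control of $T(M,\lambda)$ at large amplitudes needed to complete existence beyond the local bifurcation regime is the secondary difficulty.
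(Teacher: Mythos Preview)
Your arguments for \ref{conjugate zero part} and \ref{conjugate uniqueness part} are correct. Part \ref{conjugate uniqueness part} is essentially identical to the paper's: both differentiate $\flowforce(U_+(\lambda),\lambda)$ in $\lambda$, kill the $\dot U_+$ contribution via the Euler--Lagrange equation, and use the sign of $\mathcal B_\lambda$. Your treatment of \ref{conjugate zero part}, by contrast, is a genuinely different and more elementary route than the paper's: you use the energy identity, the pointwise bounds $\strainW'(q)>1$ and $-\varkappa\,b(\varkappa,0)\le\varkappa^2$, and the sharp Poincar\'e constant on $(-\tfrac\pi2,\tfrac\pi2)$, whereas the paper deduces \ref{conjugate zero part} from the same period-map machinery it builds for \ref{conjugate positive part}, observing that at $\lambda=0$ the period satisfies $\lim_{c\searrow 0}P(c,0)=\pi$ and $P_c>0$, so $P(c,0)>\pi$ for every $c>0$. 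Your argument is shorter and avoids the period map entirely; the paper's has the advantage of being a by-product of the analysis already needed for \ref{conjugate positive part}.

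For \ref{conjugate positive part} the strategies diverge. The paper rewrites the planar system \eqref{U planar ODE} in polar coordinates and proves that the half-period $P(c,\lambda)$ is \emph{strictly increasing} in the energy level $c$ by an explicit computation of $\partial_c\Theta_y$; the key inequality
\[
  f(r,\theta)\big(b-\varkappa b_\varkappa\big) + 2b\,q\big(3\strainW''+2q\strainW'''\big) < 0
\]
uses the enhanced ellipticity \eqref{sufficient ellipticity on W} directly (through $3\strainW''+2q\strainW'''>0$) together with the convexity and sign conditions on $b$. This single monotonicity statement simultaneously yields existence and uniqueness of the positive solution for each $\lambda>0$. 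Your plan reaches for a similar time-map together with a Brezis--Oswald/Picone identity adapted to the quasilinear operator $\partial_y(\strainW'(U_y^2)U_y)$; this is plausible in one dimension, but as you yourself flag, the adaptation is nontrivial, and your heuristic ``convexity makes $T$ grow'' is not a proof. The paper's polar-coordinate computation is the cleaner path here and is precisely where \eqref{sufficient ellipticity on W} enters; you do not clearly exploit that hypothesis in your handling of \ref{conjugate positive part}. One further caution: invoking the small-amplitude theory of Section~\ref{small-amplitude section} to obtain existence would be circular, since the proof of Theorem~\ref{small amplitude theorem} already appeals to Proposition~\ref{conjugate flow proposition}.
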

In \ref{conjugate positive part} there will also exist many other solutions which change sign, but these do not play a major role in our analysis. Before giving the proof of this result, let us state and prove a corollary for the original problem of finding monotone front solutions to the PDE \eqref{anti-plane shear equation}.
\begin{corollary}[Limiting states of monotone symmetric fronts]\label{limiting corollary}
  Suppose that $(u,\lambda) \in \Xspace_\infty \times [0,\infty)$ is a front solution of \eqref{anti-plane shear equation} which is strictly monotone in that $\partial_x u > 0$ in $\Omega$. Then necessarily $\lambda > 0$ and $\displaystyle \lim_{x \to \pm\infty} u(x, \placeholder) =  U_\pm(\lambda)$.
\end{corollary}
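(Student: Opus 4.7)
The plan is a short deduction from Proposition~\ref{conjugate flow proposition}. First, set $U_\pm(y) := \lim_{x \to \pm\infty} u(x,y)$. These exist and lie in $C^3([-\tfrac{\pi}{2},\tfrac{\pi}{2}])$ because, by the definition of $\Xspace_\infty$ in \eqref{infinity spaces definition}, all derivatives $\partial^\beta u$ with $|\beta| \le 3$ have uniform limits in $y$ as $x \to \pm\infty$. The odd-in-$x$ symmetry gives $U_- = -U_+$. Moreover, for any $k \ge 1$, uniform convergence $u(\placeholder, y) \to U_+(y)$ in $y$ forces $\partial_x^k u(x,y) \to 0$ as $x \to +\infty$: integrate $\partial_x^k u$ over a unit $x$-interval and use that $\partial_x^k u$ itself has a limit to see that the limit must vanish. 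Consequently, passing to $x \to \pm\infty$ in the PDE \eqref{anti-plane shear equation} together with its Dirichlet condition shows that $U_\pm$ are classical solutions of the ODE \eqref{U ODE}.

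Next I would pin down the sign of $U_+$. Oddness in $x$ gives $u(0,y) = 0$, while the strict inequality $\partial_x u > 0$ throughout $\Omega$ yields $u(x,y) > 0$ for every $x > 0$ and $y \in \Omega' = (-\tfrac{\pi}{2}, \tfrac{\pi}{2})$. Fixing such $y$ and any $x_0 > 0$, monotonicity in $x$ gives $U_+(y) \ge u(x_0,y) > 0$, so $U_+ > 0$ on $\Omega'$. Thus $U_+$ is a sign-constant, nontrivial solution of \eqref{U ODE}, and $U_- = -U_+ < 0$ on $\Omega'$.

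Finally I would invoke Proposition~\ref{conjugate flow proposition} to identify the limits. If $\lambda = 0$ held, then part~\ref{conjugate zero part} would force $U_+ \equiv 0$, contradicting the strict positivity just established; hence $\lambda > 0$. At this $\lambda$, part~\ref{conjugate positive part} asserts that the only sign-constant, nontrivial solutions of \eqref{U ODE} are $\pm U_+(\lambda)$, and positivity of $U_+$ singles out $U_+ = U_+(\lambda)$. Then $U_- = -U_+ = U_-(\lambda)$, which is the asserted convergence.

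The only delicate step is the first one, namely confirming that $U_\pm$ are genuine $C^2$ solutions of \eqref{U ODE} rather than merely pointwise or distributional limits. This is essentially built into the choice of function space $\Xspace_\infty$, together with the elementary observation that uniform convergence in $x$ of $u$ and all its derivatives up to order three forces every $x$-derivative of positive order to vanish at $\pm\infty$. Everything after that is a direct application of Proposition~\ref{conjugate flow proposition}, with no further analytical subtlety.
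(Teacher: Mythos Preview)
Your argument is correct and follows essentially the same route as the paper: establish $U_- < 0 < U_+$ from oddness and strict monotonicity, rule out $\lambda = 0$ via Proposition~\ref{conjugate flow proposition}\ref{conjugate zero part}, and then identify $U_\pm$ with $U_\pm(\lambda)$ via the sign-definite uniqueness in Proposition~\ref{conjugate flow proposition}\ref{conjugate positive part}. The paper additionally remarks that $U_+$ and $U_-$ are conjugate (invoking part~\ref{conjugate uniqueness part}), but this observation is not actually needed for the conclusion; your version, which relies only on parts~\ref{conjugate positive part} and~\ref{conjugate zero part}, is slightly leaner. Your extra care in verifying that $U_\pm$ genuinely solve \eqref{U ODE}---by showing that the $x$-derivatives vanish at infinity---is a point the paper takes as read from the discussion preceding \eqref{U ODE}, but it is correctly argued.
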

\begin{proof}
  Since $u$ is odd in $x$, $u(0, y) = 0$ for $\abs y \leq \frac{\pi}{2}$. The monotonicity of $u$ in $x$ therefore implies that its limiting states $U_\pm := \lim_{x \to \pm\infty} u(x,\placeholder)$ satisfy $U_- < 0 < U_+$ for $\abs y < \frac{\pi}{2}$. In particular, $U_\pm$ are distinct and hence conjugate by the above discussion. If $\lambda = 0$, then Proposition~\ref{conjugate flow proposition}\ref{conjugate zero part} forces $U_- = U_+ = 0$, which is a contradiction. If $\lambda > 0$, then Proposition~\ref{conjugate flow proposition}\ref{conjugate positive part}--\ref{conjugate uniqueness part} imply that $U_\pm = U_\pm(\lambda)$ as desired.
\end{proof}

\begin{proof}[Proof of Proposition~\ref{conjugate flow proposition}]
  First we rewrite the interior equation of \eqref{U ODE} as a planar system
  \begin{equation}
    \label{U planar ODE}
    \left\{ \begin{aligned}
      U_{y} & = V, \\
      V_{y} & = \frac{b(U, \lambda)}{\strainW'(V^2) + 2\strainW''(V^2) V^2}.
    \end{aligned} \right.
  \end{equation}
  Here the denominator is strictly positive thanks to \eqref{ellipticity on W}, which follows in turn from the stronger condition \eqref{sufficient ellipticity on W}. Fix $\lambda > 0$.  It is easy to confirm from \eqref{taylor} and \eqref{cond on b} that the origin is the only rest point for this system, and the quantity
  \begin{equation}\label{Conserved UV} 
    H(U,V,\lambda) := \strainW'(V^2) V^2 - \frac{1}{2} \strainW(V^2) - \mathcal{B}(U, \lambda)  
  \end{equation}
  is conserved. Note that $\mathcal{B} \le 0$ and from \eqref{ellipticity on W} we have
  \[
    2q\strainW'(q) - \strainW(q) = \int^q_0 \LC \strainW'(s) + 2\strainW''(s) s \RC\,ds > 0 \quad \text{for } q > 0.
  \]
  Hence $H(U, V,\lambda) \ge 0$ and thus all orbits are periodic and centered on $(U,V) = (0,0)$.  

  Consider the level set 
  \[ H(U,V,\lambda) = c,\]
  for a fixed $c > 0$.  It will intersect the $V$-axis at $(0,\pm V_0(c))$ for some unique $V_0(c)>0$.  
  To find a positive solution to the boundary value problem \eqref{U ODE}, we look for a value of $c$ such that the orbit through $(0, V_0(c))$ arrives at $(0,-V_0(c))$ in time $\pi = |\partial \Omega^\prime|$.

It is convenient to switch to polar coordinates for the dependent variables:  
  \[ (U,V) \longmapsto (R, \Theta), \qquad U = R \cos{\Theta}, \quad V = R \sin{\Theta}.\]
  This transforms the planar system \eqref{U planar ODE} to 
  \begin{equation}
    \label{R Theta planar ODE}
    \left\{ \begin{aligned}
      R_{y} & = {1\over2} R \sin{(2\Theta)} + \frac{b(R\cos\Theta, \lambda) \sin\Theta}{f(R, \Theta)}, \\
      \Theta_{y} & = -\sin^2{(\Theta)} + \frac{b(R\cos\Theta, \lambda) \cos\Theta}{Rf(R, \Theta)},
    \end{aligned} \right.
  \end{equation}
  where, thanks to \eqref{ellipticity on W},
  \[
    f(R, \Theta) := \strainW'(R^2\sin^2\Theta) + 2 \strainW''(R^2\sin^2\Theta) R^2 \sin^2\Theta > 0.
  \]
  From \eqref{R Theta planar ODE} and the symmetry of the equation, we compute that the time required for the orbit to travel from $(0, V_0(c))$ to $(0, -V_0(c))$ is
  \begin{equation}
    \label{def period map}
    \begin{aligned}
      {P}(c,\lambda) & := 2 \int_{\frac{\pi}{2}}^0 {d\theta \over \Theta_y(r(\theta,c,\lambda) \sin\theta)} \\
      & =  2 \int_0^{\frac{\pi}{2}} \frac{r(\theta, c, \lambda) f(r(\theta, c, \lambda), \theta) }{ r(\theta, c,\lambda) f(r(\theta, c, \lambda), \theta) \sin^2\theta - b(r(\theta, c, \lambda)\cos\theta, \lambda) \cos\theta } \, d\theta,
    \end{aligned}
  \end{equation}
  where $r = r(\theta,c,\lambda) > 0$ is defined to be the unique positive solution to 
  \begin{equation}
    H(r\cos{\theta}, r\sin{\theta},\lambda) = c.\label{H level set} 
  \end{equation}
  We will often suppress the arguments of $r$ in the interest of readability.

  Differentiating \eqref{H level set} in $c$, we see that 
  \begin{align*} 1 & = {r_c \over r} \left(  q(\strainW'(q) + 2q \strainW''(q))   - b(\varkappa, \lambda) \varkappa  \right) \Big|_{\substack{ q = r^2 \sin^2{\theta} \\ \varkappa = r \cos{\theta}}}  
  \end{align*}
  and hence $r_c > 0$ by \eqref{ellipticity on W} and \eqref{cond on b}.  Likewise, differentiating the period map ${P}$ gives 
  \[
    {P}_c(c,\lambda) = 2 \int_0^{\frac{\pi}{2}} {\partial_c [\Theta_{y}(r \sin{\theta})] \over \Theta_y^2} \,d\theta.
  \]
  A direct computation shows that
  \begin{align*}
    \partial_c[\Theta_y(r \sin{\theta})]  & = - {r_c \cos\theta \over r^2 f^2(r,\theta)} \LC f(r,\theta) (b - \varkappa b_\varkappa) + 2b  q(3\strainW'' + 2q \strainW''')  \RC \Big|_{\substack{q = r^2 \sin^2{\theta} \\ \varkappa = r \cos{\theta}}}  > 0
  \end{align*}
  for $\theta \in (0,{\pi \over 2})$ by  \eqref{sufficient ellipticity on W} and \eqref{cond on b}. Therefore, we conclude that 
  \begin{equation}\label{P monotone}
    P_c(c,\lambda) > 0 \quad \text{for }\ \lambda > 0,
  \end{equation}
  which confirms that $P$ is strictly increasing in $c$. 

  On the other hand, sending $c \searrow 0$ in \eqref{def period map} we have for each $\lambda > 0$, 
  \begin{align*}
    \lim_{c \searrow 0} P(c,\lambda) & = \lim_{c \searrow 0} 2 \int_0^{\frac{\pi}{2}} \frac{f(r, \theta) }{  f(r, \theta) \sin^2\theta - {b(r\cos\theta, \lambda) \cos^2\theta \over r\cos\theta} } \, d\theta \\
    & =  2 \int_0^{\frac{\pi}{2}} { d\theta \over \sin^2\theta - b_\varkappa(0, \lambda) \cos^2\theta}  = {\pi \over \sqrt{-b_\varkappa(0,\lambda)}} < \pi,
  \end{align*}
  where we used \eqref{cond on b} in the last inequality. A solution to \eqref{U ODE} will correspond to the energy levels $c$ with
  \begin{equation}\label{period P}
    P(c,\lambda) = {\pi \over k}, \quad k = 1, 2, \ldots.
  \end{equation}
  Simply by the implicit function theorem there is, for each $k \geq 1$,
  a curve $c = c_k(\lambda)$ in the $(c,\lambda)$-plane satisfying \eqref{period P}. Since $P_c > 0$, it follows that
  \[
    c_1(\lambda) < c_2(\lambda) < \cdots. 
  \]
  We are only interested in the first of these, which gives rise to the unique positive solution $U_+(\lambda)$ of \eqref{U ODE} corresponding to $P(c,\lambda) = \pi$.  Repeating the above calculation at $\lambda = 0$ we find that the period map is still monotone in $c$ but since $b_\varkappa(0,0) = -1$, 
  \[
    \lim_{c \searrow 0} P(c,0) = \pi.
  \]
  Thus, in this case the only solution to \eqref{U ODE} is the trivial one $U = 0$. Since $b_{\varkappa\lambda}(0,0) = 1$ by \eqref{taylor}, this result extends to $-1 \ll \lambda < 0$.

  It remains to show \ref{conjugate uniqueness part}. That $U_+$ and $U_-$ are conjugate is clear from the symmetry of the equations, and by the above arguments, the only other solution of \eqref{U ODE} which does not change sign is the trivial one $U = U_\pm(0) \equiv 0$.  Since $\flowforce(0,\lambda) = 0$, it therefore suffices to show that $\flowforce(U_\pm(\lambda), \lambda) \neq 0$ for $\lambda > 0$.  To that end, denoting $\dot U_\pm := \partial_\lambda U_\pm$, we compute that
  \begin{align*}
  \frac{d}{d\lambda} \flowforce(U_\pm(\lambda), \lambda) & =  \frac{d}{d\lambda} \int_{-\frac{\pi}{2}}^{\frac{\pi}{2}} \left( \frac{1}{2} \strainW(U_{\pm y}^2) + \mathcal{B}(U_\pm, \lambda) \right) \, dy \\
  & = \int_{-\frac{\pi}{2}}^{\frac{\pi}{2}} \left( \strainW^\prime(U_{\pm y}^2) U_{\pm y} \dot U_{\pm y} + b(U_\pm,\lambda) \dot U_\pm + \mathcal{B}_\lambda(U_\pm, \lambda) \right) \, dy \\
  & = \int_{-\frac{\pi}{2}}^{\frac{\pi}{2}} \mathcal{B}_\lambda(U_\pm, \lambda) \, dy,
  \end{align*}
  where the last line follows from integrating by parts and the equation \eqref{U ODE} satisfied by $U$, and we have suppressed some $\lambda$ dependencies for readability.  By the sign condition \eqref{cond on b}, it then follows that $\lambda \mapsto \flowforce(U_\pm(\lambda), \lambda)$ is strictly decreasing.  This completes the proof. 
\end{proof}

\begin{remark} \label{limiting remark}
  The above proof gives us additional information about $U_+(\lambda)$. In particular, it is even and strictly decreasing for $y>0$.
\end{remark}

\section{Small-amplitude theory} \label{small-amplitude section}

In this section, we recall the existence theory for anti-plane shear fronts in a neighborhood of the undisturbed state.  This was first obtained in \cite[Section 3]{chen2019center}, but we will establish some further properties in preparation for the global continuation. Without loss of generality we only consider increasing fronts, as the analogous local curve of decreasing fronts can be obtained by simply reflecting in $x$ (cf.~Remark~\ref{reflection remark}).

\begin{theorem}[Small-amplitude fronts] \label{small amplitude theorem} 
  Let conditions \eqref{b odd}--\eqref{cond on b} hold. There exists $\varepsilon_0 > 0$ and a local $C^0$ curve 
  \[ 
    \cm_\loc = \left\{ (u^\varepsilon,\varepsilon^2) :  0 < \varepsilon < \varepsilon_0 \right\} 
    \subset \Xspace_\infty \times (0,\infty)
  \]
  of solutions to \eqref{anti-plane shear equation} with the following properties.
  \begin{enumerate}[label=\rm(\alph*)]   
  \item \textup{(Asymptotics)} The solutions on $\cm_\loc$ have the leading-order form
    \begin{equation}
      \label{local asymptotics}
      \begin{aligned}
        u^\varepsilon(x,y) & =  a_1 \varepsilon \tanh{\LC {\varepsilon x \over \sqrt{2}} \RC} \cos{(y)} + O(\varepsilon^2) \qquad \textup{in } C_\bdd^{3+\alpha}(\overline{\Omega}),
      \end{aligned}
    \end{equation}
    with $a_1 = 2/\sqrt{3(b_2+2w_1)}$. In particular, $\cm_\loc$ bifurcates from the trivial solution $(u,\lambda)=(0,0)$.  
  \item \textup{(Uniqueness)} \label{uniqueness part} In a neighborhood of $(0,0)$, all front solutions of \eqref{anti-plane shear equation} are, up to reflection and translation in $x$, contained in $\cm_\loc$.
  \item \textup{(Kernel)}\label{kernel part} The kernel of the linearized problem at $(u^\varepsilon, \varepsilon^2)$ is generated by $\partial_x u^\varepsilon$.  
  \end{enumerate}
\end{theorem}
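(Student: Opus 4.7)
The proof will invoke the analytic center manifold reduction of \cite{chen2019center} in the spatial variable $x$; parts (a) and (b) are essentially contained there, while part (c) can be extracted from the reduced 2D flow by an exponential-dichotomy argument. First I would recall the spatial-dynamics formulation: writing $U=(u,u_x)$ and treating $x$ as time, the PDE \eqref{anti-plane shear equation} becomes a reversible, quasilinear evolution equation in a suitable phase space of H\"older functions of $y\in(-\tfrac\pi2,\tfrac\pi2)$ vanishing at the endpoints. A Fourier-in-$y$ computation shows that its linearization at $(u,\lambda)=(0,0)$ has eigenvalues $\mu=\pm\sqrt{n^2-1}$ for $n\ge 1$; the mode $n=1$ gives a two-dimensional generalized kernel associated with $\cos y$, while all other eigenvalues are real and bounded away from zero. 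The analytic, $\mathbb{Z}_2$-equivariant (under $u\mapsto-u$), reversible (under $x\mapsto-x$) center manifold theorem used in \cite{chen2019center} then produces a two-dimensional invariant manifold on which the dynamics is governed by
\begin{equation*}
  A''=-\lambda A+\beta A^3+O\big((|A|+|\lambda|^{1/2})^5\big),
\end{equation*}
with $\beta=\tfrac34(b_2+2w_1)>0$ by \eqref{assumption on constants}, as determined by projecting the cubic contribution of \eqref{anti-plane shear equation} onto $\cos y$.

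For parts (a) and (b) I would then analyze this Hamiltonian ODE. It has first integral $\tfrac12(A')^2+\tfrac{\lambda}{2}A^2-\tfrac{\beta}{4}A^4$ and, for $\lambda=\varepsilon^2\in(0,\varepsilon_0^2)$, possesses equilibria at $A=0$ (a center) and $A=\pm\sqrt{\lambda/\beta}=\pm a_1\varepsilon+O(\varepsilon^2)$ (saddles), with $a_1=1/\sqrt\beta=2/\sqrt{3(b_2+2w_1)}$; up to $x$-translation and reflection there is a unique saddle--saddle heteroclinic $A^\varepsilon(x)=a_1\varepsilon\tanh(\varepsilon x/\sqrt{2})+O(\varepsilon^2)$. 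Lifting through the center-manifold parameterization yields $u^\varepsilon$ with the expansion \eqref{local asymptotics}. Uniqueness follows because any sufficiently small front in $\Xspace_\infty$ must, by Proposition~\ref{conjugate flow proposition}, have limits at $x=\pm\infty$ among the $x$-independent equilibria $\{0,\pm U_+(\lambda)\}$; an exponential-dichotomy argument then places the full orbit on the center manifold, and on the reduced ODE the only heteroclinics between these small equilibria are the saddle--saddle connections $\pm A^\varepsilon(\cdot-x_0)$ (no heteroclinic can terminate at the center $A=0$).

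For part \ref{kernel part}, let $v\in\kernel\F_u(u^\varepsilon,\varepsilon^2)$. A short perturbation calculation shows that the transversal linearized operator at the endpoints $\pm U_+(\varepsilon^2)$ has principal eigenvalue $\prineigenvaluepm(u^\varepsilon,\varepsilon^2)<0$ for $0<\varepsilon\ll 1$: at $\varepsilon=0$ this eigenvalue vanishes with eigenfunction $\cos y$, and testing the $O(\varepsilon^2)$ perturbation $\lambda\placeholder+6w_1\partial_y(U_{+y}^2\partial_y\placeholder)-3b_2U_+^2\placeholder$ against $\cos y$ gives a strictly negative first-order shift, again thanks to $b_2+2w_1>0$. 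Consequently $v$ decays exponentially as $x\to\pm\infty$, and standard exponential-dichotomy theory in the spatial-dynamics setup places $v$ on the tangent space of the center manifold along $A^\varepsilon$. This reduces the problem to finding the $x$-bounded kernel of the linearized reduced ODE $w''=[-\lambda+3\beta(A^\varepsilon)^2]w+\cdots$, which is one-dimensional and spanned by $\partial_x A^\varepsilon$ because $A^\varepsilon$ is a saddle--saddle heteroclinic of a planar Hamiltonian system (the second linearly independent solution grows exponentially at $x=\pm\infty$). Pulling back through the center-manifold parameterization gives $\kernel\F_u(u^\varepsilon,\varepsilon^2)=\linspan\{\partial_x u^\varepsilon\}$. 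The main obstacle is verifying $\prineigenvaluepm<0$: this inequality is what enables the dichotomy and reduces the kernel question to the 2D picture, and it rests squarely on the strict positivity in \eqref{assumption on constants}; once it is in hand, the remaining steps are routine applications of the center-manifold machinery and classical ODE theory.
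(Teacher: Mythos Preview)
Your approach is essentially the same as the paper's: center manifold reduction from \cite{chen2019center}, analysis of the reduced planar Hamiltonian ODE to obtain the heteroclinic and its asymptotics, uniqueness via the phase portrait after identifying the admissible limiting states, and one-dimensionality of the kernel by reducing the linearized problem to the linearized reduced ODE and exploiting hyperbolicity at the endpoints.

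The one organizational difference worth noting is in part~\ref{kernel part}. The paper invokes \cite[Theorem~1.6]{chen2019center} directly to conclude that any element of $\ker\F_u(u^\varepsilon,\varepsilon^2)$ must, after restriction to $y=0$, solve the linearized reduced ODE; it then computes the limiting coefficient matrix of that planar system and checks hyperbolicity there, never needing $\prineigenvaluepm<0$ for this step. You instead establish $\prineigenvaluepm(u^\varepsilon,\varepsilon^2)<0$ first (your perturbation calculation is exactly the content of the paper's separate Lemma~\ref{spectral nondegeneracy lemma}, and your computation indeed gives $\prineigenvalue\sim -\varepsilon^2$) and use this PDE-level hyperbolicity, via exponential dichotomy, to place kernel elements on the tangent space of the center manifold. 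Both routes are valid and lead to the same reduced linear problem; the paper's route keeps Theorem~\ref{small amplitude theorem} independent of the spectral lemma, while yours has the side benefit of proving that lemma along the way.
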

\begin{proof}
  The existence of $\cm_\loc$, as well as the asymptotic information in \eqref{local asymptotics}, is proved in \cite[Theorem 3.1]{chen2019center} using a center manifold reduction approach. This theory constructs a sufficiently smooth coordinate map\footnote{The function $\Psi$ in \cite{chen2019center} in fact takes values in a certain weighted H\"older space on $\Omega$, but for our present purposes it is sufficient to restrict its output to the truncated domain $\Omega_1$.}  
  \begin{align*}
    \Psi \maps \R^3 \to C^{3+\alpha}(\overline{\Omega_1}), \qquad \textrm{where } \Omega_1 := (-1,1) \by \Omega^\prime,
  \end{align*}
  with the following properties. First, any solution along $\cm_\loc$ can be recovered from its trace $v := u(\placeholder,0)$ via
  \begin{equation}\label{u from v}
    u(x,y) = v(x) \varphi_0(y) + \Psi(v(x),v'(x),\varepsilon)(0,y),
  \end{equation}
  where here
  \begin{align*}
    \varphi_0(y) := \cos y
  \end{align*}
  is an element of the kernel of $\limL'(0,0)$. Second, the trace $v$ of any sufficiently small solution solves the second order ODE
  \begin{equation}\label{ap ODE}
    v'' = f(v, v', \varepsilon),
    \qquad 
    \text{where} \quad
    f(A, B, \varepsilon) := {d^2 \over dx^2} \Big|_{x=0} \Psi(A,B,\varepsilon)(x, 0).
  \end{equation} 
  By its construction and because of the symmetries of the problem, $\Psi$ satisfies 
  \begin{align}
    \label{Psi symmetry}
    \begin{aligned}
      \Psi(0,0,\varepsilon) &= 0 \text{ for all } \varepsilon,
      &
      \Psi(-A,B,\varepsilon) &= -\Psi(A,B,\varepsilon),
      \\
      \Psi_A(0,0,0) &= \Psi_B(0,0,0) = 0,
      &
      \Psi(A,-B,\varepsilon)(-x,y) &= \Psi(A,B,\varepsilon)(x,y).
    \end{aligned}
  \end{align}
  Finally, $\Psi$ has the expansion
  \begin{equation}\label{Psi expansion}
    \begin{aligned}
      \Psi(A,B,\varepsilon)(x,y)
      &= -\frac 12 x^2 \cos(y) \varepsilon^2 A
      + \left[ \frac{3b_2 + 6w_1}8 x^2 \cos y + \frac{b_2-6w_1}{32}(\cos y - \cos(3y)) \right] A^3\\
      &\qquad + O\Big( (\abs A + \abs B^{1/2} + \abs\varepsilon)^4 \Big).
    \end{aligned}
  \end{equation}

  Inserting \eqref{Psi expansion} into \eqref{ap ODE} and switching to the rescaled variables
  \begin{equation}\label{rescaling}
    x =: X/\varepsilon,
    \qquad 
    v(x) =: \varepsilon V(X),
    \qquad
    v_x(x) =: \varepsilon^2 W(X),
  \end{equation}
  we obtain the first order system
  \begin{equation}\label{planar reduced ODE} \left\{ \begin{aligned} 
    V_X & = W, \\
    W_X & = - V + a_1^{-2} V^3 + R(V, W, \varepsilon),
  \end{aligned} \right.
  \end{equation}
  with the rescaled error term $R(V,W,\varepsilon) = O\left( |\varepsilon| (|V| + |W|) \right)$. When $\varepsilon = 0$, \eqref{planar reduced ODE} has the explicit heteroclinic solution 
  \begin{equation*}
    V^0(X) = a_1 \tanh \frac X {\sqrt{2}}, \qquad W^0(X) = {a_1 \over \sqrt{2}} \sech^2 \frac X{\sqrt{2}}.
  \end{equation*}
  shown in Figure~\ref{phase portrait figure}. Since \eqref{planar reduced ODE} enjoys the symmetries $(V(X),W(X))\mapsto (V(-X),-W(-X))$ and $(V(X),W(X))\mapsto (V(X),-W(X))$, one can show that this heteroclinic connection persists for small $\varepsilon > 0$. Denote this solution by $(V^\varepsilon,W^\varepsilon)$, and the corresponding limits as $x \to \pm\infty$ by $(V_\pm^\varepsilon,0)$. Undoing the scaling and using \eqref{u from v}, one obtains both the existence of $\cm_\loc$ and the asymptotics \eqref{local asymptotics}.

  By the proof of Proposition~\ref{conjugate flow proposition}, all small solutions $(U,\lambda)$ of the transverse ODE \eqref{U ODE} besides the trivial solution $U=0$ have $\lambda = \varepsilon^2 > 0$ and $U = U_\pm(\lambda)$. Moreover, $U_+$ and $U_-$ are conjugate to one another but not to $U=0$. Thus all small fronts which are not constant in $x$ must have $U_\pm(\varepsilon^2)$ as their asymptotic states. In particular, $V_\pm^\varepsilon$ above is nothing other than $U_\pm(\varepsilon^2)(0)$.

  \begin{figure}
    \centering
    \includegraphics[scale=1.1]{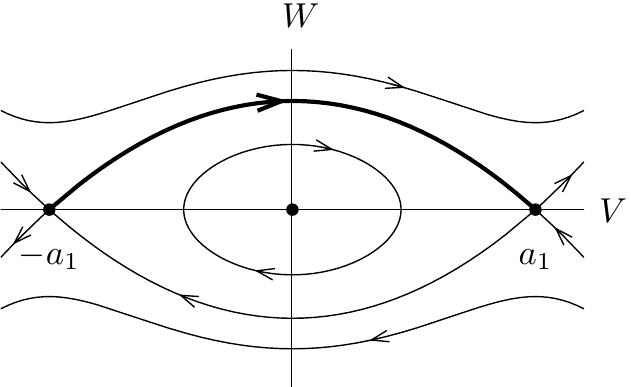}
    \caption{Phase portrait for the rescaled reduced ODE \eqref{planar reduced ODE} at $\varepsilon = 0$.  The thick curve is the heteroclinic orbit connecting the rest points $(-a_1,0)$ and $(a_1,0)$.}
    \label{phase portrait figure}
  \end{figure}

  We can now prove the uniqueness claimed in \ref{uniqueness part}. Suppose that $(u,\lambda)$ is a nontrivial front which is small in the sense that $\n u_{C^{3+\alpha}(\Omega)} + \abs \lambda \le \delta \ll 1$. By the above discussion we necessarily have $\lambda = \varepsilon^2 > 0$, and moreover $u \to U_\pm(\varepsilon^2)$ or $u \to U_\mp(\varepsilon^2)$ as $x\to\pm\infty$. Reflecting in $x$ if necessary, we can assume that we are in the first case. Choosing $\delta$ small enough, we can also guarantee that the trace $v = u(\placeholder,0)$ solves \eqref{ap ODE}. Rescaling, we obtain a solution $(V,W)$ of \eqref{planar reduced ODE}, with the same limiting states $(V_\pm^\varepsilon,0)$ as the orbit $(V^\varepsilon,W^\varepsilon)$ corresponding to $u^\varepsilon$. We claim that in fact $(V,W)$ is a translate of $(V^\varepsilon,W^\varepsilon)$. To prove this claim, consider the phase portrait of \eqref{planar reduced ODE}, which is qualitatively the same as the one shown in Figure~\ref{phase portrait figure}. The unstable manifold $\unstableman(V^\varepsilon_-,0)$ has two portions, one in the upper half plane and one in the lower half plane. If $(V,W)$ is not a translate of $(V^\varepsilon,W^\varepsilon)$, then its orbit must coincide with the portion of $\unstableman(V^\varepsilon_-,0)$ in the lower half plane. In particular, $V < V^\varepsilon_- < 0$ for $X \ll -1$. Since $V \to V^\varepsilon_+ > 0$ as $X \to +\infty$, there must be some $X_0$ where $V_X(X_0)=W(X_0)=0$. Shifting $X_0$ to the origin, reversibility implies that $(V(X),W(X)) = (V(-X),-W(-X))$. But then $(V,W)$ is a homoclinic orbit with $(V,W)\to(V^\varepsilon_-,0)$ as $\abs X \to \infty$, which is a contradiction. 
 
 Combining this uniqueness with the symmetry of \eqref{anti-plane shear equation} under reflections in $x$, $y$, and $u$, we immediately obtain that solutions along $\cm_\loc$ are odd in $x$ and even in $y$. Thus $\cm_\loc \sub \Xspace_\infty \by (0,\infty)$.
  
  To prove \ref{kernel part} we adopt the idea of \cite[Theorem 1.6]{chen2019center} which, in our setting, says that $\dot u$ is in the kernel of the linearized operator at $(u, \lambda)$ only if $\dot v := \dot u(\placeholder, 0)$ solves the linearized reduced equation
  \[
    \dot v'' 
    = \nabla_{(A,B)} f(v,  v', \lambda) \cdot ( \dot v,  \dot v').
  \]
  Equivalently, the corresponding rescaled quantities $(\dot V, \dot W)$ solve a nonautonomous planar system
  \[ 
    \begin{pmatrix} 
      \dot V_X \\ 
      \dot W_X 
    \end{pmatrix} 
    = 
    \begin{pmatrix}
      0 & 1 \\
      -1 + 3a_1^{-2} V^2 + R_V(V,W,\varepsilon) & R_W(V,W,\varepsilon)
    \end{pmatrix}
    \begin{pmatrix} 
      \dot V \\ 
      \dot W 
    \end{pmatrix}
    =: \mathcal{M}(X) 
    \begin{pmatrix} 
      \dot V \\ 
      \dot W 
    \end{pmatrix}.
  \] 
  Taking limits, we find that
  \begin{align*}
    \lim_{X\to\pm\infty}
    \mathcal M(X) &=
    \begin{pmatrix}
      0 & 1 \\
      -1 + 3a_1^{-2} V_\pm^2 + R_V(V_\pm,0,\varepsilon) & R_W(V_\pm,0,\varepsilon)
    \end{pmatrix}
    =
    \begin{pmatrix}
      0 & 1 \\
      2+ O(\varepsilon) & O(\varepsilon) 
    \end{pmatrix},
  \end{align*}
  and hence that $\mathcal{M}(X)$ is strictly hyperbolic for $|X| \gg 1$  with one negative and one positive eigenvalue.  A standard dynamical systems argument implies that there cannot be two linearly independent solutions of the reduced linearized problem that are uniformly bounded.  We may then conclude that the kernel of the linearized operator is indeed generated by $\partial_x u^\varepsilon$.
\end{proof}

Let us next consider the principal eigenvalues for the transversal linearized problems at infinity along $\cm_\loc$.   

\begin{lemma}[Local spectral non-degeneracy] \label{spectral nondegeneracy lemma} 
In the setting of Theorem~\ref{small amplitude theorem}, every solution $(u,\lambda) \in \cm_\loc$  with $0 < \lambda \ll 1$ is spectrally non-degenerate in that 
  \begin{equation} 
      \prineigenvalue^-(u,\lambda) =  \prineigenvalue^+(u,\lambda) < 0.\label{local spectral nondegen} 
  \end{equation}
\end{lemma}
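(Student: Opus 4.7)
The plan is to obtain $\prineigenvalue^\pm(u^\varepsilon, \varepsilon^2)$ by a regular perturbation expansion in $\varepsilon$ about the undisturbed state. Since $u^\varepsilon$ is odd in $x$, the structure already noted in Section~\ref{preliminaries section} forces $\prineigenvalue^-(u^\varepsilon, \varepsilon^2) = \prineigenvalue^+(u^\varepsilon, \varepsilon^2)$, so I only need to handle $x = +\infty$. By Corollary~\ref{limiting corollary}, the limit $\lim_{x \to +\infty} u^\varepsilon(x, \placeholder)$ equals $U := U_+(\varepsilon^2)$, whence the transversal linearized operator is the Dirichlet linearization of the transverse ODE \eqref{U ODE} at $U$:
\begin{equation*}
  \limL'_+(u^\varepsilon, \varepsilon^2) v
  = \partial_y \!\LB \LC \strainW'(U_y^2) + 2 U_y^2 \strainW''(U_y^2) \RC v_y \RB
    - b_\varkappa(U, \varepsilon^2)\, v
\end{equation*}
on $\Omega^\prime = (-\tfrac{\pi}{2}, \tfrac{\pi}{2})$ with Dirichlet conditions at $y = \pm \tfrac{\pi}{2}$.

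First I would record the leading-order expansion $U(y) = a_1 \varepsilon \cos y + O(\varepsilon^2)$, obtained either by taking $x \to +\infty$ in \eqref{local asymptotics} or by a direct Lyapunov--Schmidt analysis of \eqref{U ODE}. Inserting this together with the Taylor expansions \eqref{taylor} of $\strainW$ and $b$, and using that $b$ is odd in $\varkappa$ so that the linear-in-$\varepsilon$ correction automatically vanishes, produces an operator expansion of the form
\begin{equation*}
  \limL'_+(u^\varepsilon, \varepsilon^2)
  = L^{(0)} + \varepsilon^2 L^{(2)} + O(\varepsilon^3),
\end{equation*}
where $L^{(0)} v = v_{yy} + v$ and $L^{(2)}$ is a concrete self-adjoint Sturm--Liouville perturbation whose coefficients depend only on $w_1$, $b_2$, and $a_1^2$.

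The unperturbed operator $L^{(0)}$ on $\Omega^\prime$ with Dirichlet data has principal eigenvalue $0$, which is simple, with strictly positive eigenfunction $\varphi_0(y) = \cos y$. Since the full operator depends analytically on $\varepsilon$, standard perturbation theory for simple eigenvalues of Sturm--Liouville operators then delivers the Rayleigh-quotient formula
\begin{equation*}
  \prineigenvalue^+(u^\varepsilon, \varepsilon^2)
  = \varepsilon^2 \cdot \frac{\int_{-\pi/2}^{\pi/2} (L^{(2)} \cos y)(y)\, \cos y \, dy}{\int_{-\pi/2}^{\pi/2} \cos^2 y\, dy}
    + O(\varepsilon^3).
\end{equation*}
Evaluating the three elementary integrals $\int \cos^2$, $\int \cos^4$, and $\int \sin^2 \cos^2$ on $\Omega^\prime$ and substituting the identity $a_1^2(b_2 + 2 w_1) = 4/3$ collapses the numerator to exactly $-\pi$, so $\prineigenvalue^+(u^\varepsilon, \varepsilon^2) = -2\varepsilon^2 + O(\varepsilon^3)$. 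This gives \eqref{local spectral nondegen} for all sufficiently small $\varepsilon > 0$.

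The potentially delicate step will be confirming that the leading coefficient is strictly (rather than merely weakly) negative. This turns out to be built in: the very same Fredholm orthogonality against $\cos y$ used to determine $a_1$ when expanding $U_+(\varepsilon^2)$ reappears in the Rayleigh numerator above, forcing the cancellation to produce the universal value $-2$ independently of the particular sizes of $b_2$ and $w_1$. The condition \eqref{assumption on constants} enters only insofar as it ensures $a_1 \in \R$ and $a_1 \neq 0$.
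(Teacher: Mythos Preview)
Your proposal is correct and follows essentially the same route as the paper: both expand $U_+(\varepsilon^2)=a_1\varepsilon\cos y+O(\varepsilon^2)$, observe that the order-$\varepsilon$ correction to $\limL'_+$ vanishes by the oddness of $b$, and then compute the order-$\varepsilon^2$ shift of the simple principal eigenvalue via the Rayleigh/Fredholm pairing against $\varphi_0=\cos y$, reducing everything to the same elementary trigonometric integrals and the identity $a_1^2(b_2+2w_1)=4/3$. The only cosmetic difference is packaging---the paper differentiates the eigenvalue equation twice in $\varepsilon$ to obtain $\ddot\sigma_0(0)$, whereas you expand the operator directly as $L^{(0)}+\varepsilon^2 L^{(2)}$---and your leading coefficient $-2\varepsilon^2$ (equivalently $\ddot\sigma_0(0)=-4$) differs from the paper's stated $\ddot\sigma_0(0)=-2$ by what appears to be an arithmetic slip there; the sign, which is all that matters, is unaffected.
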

\begin{proof}
  Note that $\prineigenvalue^{\pm}(0, 0) = 0$. Therefore, we must show that the principal eigenvalues perturb to the left  as $\lambda$ moves away from $0$ along $\cm_\loc$. As observed before, we need only consider $\sigma_0^+$ as the spectrum at $x=-\infty$ is the same. 

  For simplicity we will drop the $\pm$ and further write the limiting transversal linearized operator as 
  \[
    \limL^\prime(u,\lambda) \psi  = \partial_y \left( \left( \strainW'(U_y^2) + 2 U_y^2 \strainW''(U_y^2) \right) \partial_y \psi \right) - b_\varkappa(U, \lambda) \psi,
  \]
  where recall that $(U, \lambda)$ solves the problem
  \begin{equation}
    \label{trans prob}
    \F'(U,\lambda) := \partial_y\big( \strainW'(U_y^2) U_y \big) - b(U, \lambda) = 0.
  \end{equation}
  Clearly $\F'_U(U,\lambda) = \limL^\prime(u,\lambda)$.
  The proof of Theorem~\ref{small amplitude theorem} (or indeed the arguments in \cite{chen2019center}) shows that the solution $U$ of \eqref{trans prob} depends smoothly on $\varepsilon = \sqrt\lambda$. Moreover, sending $x \to \infty$ in \eqref{local asymptotics} yields the expansion
  \begin{align*}
    U(\varepsilon)(y) 
    = \lim_{x \to \infty} u^\varepsilon(x,y)
    = a_1 \varepsilon \cos y + O(\varepsilon^2) \qquad \textrm{in } C^{3+\alpha}([-\tfrac \pi 2,\tfrac \pi 2]).
  \end{align*}
  Using dots to denote derivatives in $\varepsilon$, we therefore have
  \begin{equation}
    \label{dot U}
    \dot U(0)(y) = a_1 \varphi_0 = a_1\cos y.
  \end{equation}
  Note that we are abusing notation somewhat by writing $U$ as a function of $\varepsilon$ rather than $\lambda$.

  With the asymptotics for $U$ in hand, we now turn to the eigenvalue problem, which in our notation is
  \begin{equation}
    \label{eigen prob}
    \LC \limL^\prime(U(\varepsilon),\varepsilon^2) - \prineigenvalue \RC \varphi = 0.
  \end{equation}
  We know that $(\varphi,\prineigenvalue,\varepsilon) = (\varphi_0,0,0)$ solves \eqref{trans prob}--\eqref{eigen prob}. By a familiar implicit function theorem argument, we deduce that there is a unique curve of nearby solutions, with $\varphi$ and $\prineigenvalue$ depending smoothly on $\varepsilon$.

  Differentiating \eqref{eigen prob} in $\varepsilon$, we find
  \begin{equation}
    \label{first deriv eigen prob}
    \limL'(U,\varepsilon^2) \dot \varphi + \dot \limL'(U,\varepsilon^2) \varphi = \dot\sigma_0 \varphi + \prineigenvalue \dot\varphi,
  \end{equation}
  where
  \[
    \dot\limL^\prime(U,\varepsilon^2) \psi := \partial_y \left( \LC 6\strainW'' + 4 U_y^2 \strainW''' \RC  U_y \dot U_y \partial_y \psi \right) -  \left(  b_{\varkappa\varkappa} \dot U + 2\varepsilon b_{\lambda\varkappa} \right) \psi
  \]
  and we are suppressing the arguments of $\strainW(U_y^2)$ and $b(U,\lambda)$ for readability. In particular, at $\varepsilon = 0$ where $U = 0$ and $\prineigenvalue = 0$, this becomes
  \[
    \dot \limL^\prime(0,0) \psi = - b_{\varkappa\varkappa}(0,0)\dot U(0) \psi = 0
  \]
  in light of \eqref{taylor}. Therefore multiplying \eqref{first deriv eigen prob} by $\varphi$, evaluating at $\varepsilon = 0$, and then integrating, we obtain
  \[
    \dot\sigma_0(0) = 0,
  \] 
  which forces us to proceed to higher order derivatives. 

  Differentiating \eqref{first deriv eigen prob} with respect to $\varepsilon$ we find that
  \begin{equation}
    \label{second deriv eigen prob}
    \limL'(U,\varepsilon^2) \ddot \varphi + 2 \dot \limL'(U,\varepsilon^2) \dot\varphi + \ddot \limL'(U,\varepsilon^2) \varphi = \ddot \sigma_0 \varphi + 2\dot\sigma_0 \dot\varphi + \prineigenvalue \ddot\varphi,
  \end{equation}
  where
  \begin{equation*}
    \begin{split}
      \ddot \limL'(U,\varepsilon^2) \psi &:= 
      \partial_y \Big[  
      \Big(  
      \big( 
      6\strainW'' + 4q \strainW''' \big)  \big( \dot U_y^2 + U_y \ddot U_y \big) +  \big( 20\strainW''' + 8q \strainW^{(4)} \big) U_y^2 \dot U_y^2  
      \Big) 
      \partial_y \psi \Big] \\
      &\qquad - \left( b_{\varkappa\varkappa\varkappa} \dot U^2 + b_{\varkappa\varkappa} \ddot U + 4\varepsilon b_{\lambda\varkappa\varkappa}\dot U + 4\varepsilon^2 b_{\lambda\lambda\varkappa} + 2b_{\lambda\varkappa} \right) \psi.
    \end{split}
  \end{equation*}
  At $\varepsilon = 0$, then, 
  \begin{equation*}
    \ddot \limL'(0,0) \psi 
    = 12 w_1 \partial_y \big(\dot U_y^2 \partial_y \psi \big) 
    - \big( 6b_2 \dot U^2 - 2  \big) \psi.
  \end{equation*}
  Multiplying \eqref{second deriv eigen prob} by $\varphi$ and integrating, we find that, at $\varepsilon=0$,
  \begin{equation*}
    \begin{split}
      \ddot \sigma_0(0) \|\varphi_0\|^2_{L^2} & = -12w_1\int^{\frac{\pi}{2}}_{-\frac{\pi}{2}} \dot U_y^2 (\partial_y\varphi_0)^2 \,dy - \int^{\frac{\pi}{2}}_{-\frac{\pi}{2}}  \LC 6b_2 \dot U^2 - 2  \RC \varphi_0^2 \,dy \\
      & = -12 a_1^2 w_1 \int^{\frac{\pi}{2}}_{-\frac{\pi}{2}}  (\partial_y \varphi_0)^2\,dy - 6 a_1^2 b_2 \int^{\frac{\pi}{2}}_{-\frac{\pi}{2}} \varphi_0^4 \,dy + 2 \|\varphi_0\|^2_{L^2},
    \end{split}
  \end{equation*}
  where here we have made use of \eqref{dot U} and the fact that $\varphi|_{\varepsilon=0} = \varphi_0 = \cos y$. Calculating the explicit integrals yields
  \[
    \ddot \sigma_0(0) = -2,
  \]
  which in turn proves that \eqref{local spectral nondegen} holds for $0 < \varepsilon \ll 1$.
\end{proof}

The final task of this section is to verify that the solutions on $\cm_\loc$ exhibit the monotonicity properties claimed in \eqref{monotonicity}.  For that purpose, we first state an elementary (but very useful) lemma.

\begin{lemma}[Nodal cone] \label{nodal cone lemma} Let $\mathscr{W}$ and $\mathscr{Z}$ be Banach spaces, and suppose that $\mathcal{N} \subset \mathscr{Z}$ is an open cone (that is, invariant under multiplication by strictly positive scalars).  If $G\maps \mathbb{R} \times \mathscr{W} \to \mathscr{Z}$ is continuous and
  \begin{equation}
    \frac{G(s,w)}{s} \to \psi \in \mathcal{N} \qquad \textup{as } (s,w) \to (0,0),~ s\neq0,\label{cone asymptotics} 
  \end{equation} 
  then, for all sufficiently small $(s,w)$ with $\pm s > 0$, we have $\pm G(s,w) \in \mathcal{N}$.
\end{lemma}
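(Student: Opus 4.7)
The plan is to exploit the quotient $G(s,w)/s$ directly: the hypothesis \eqref{cone asymptotics} says that this ratio converges to an element of the open cone $\mathcal{N}$, so for $(s,w)$ sufficiently close to the origin with $s \neq 0$, the ratio itself lies in $\mathcal{N}$. The conclusion then follows by scaling, using that $\mathcal{N}$ is invariant under multiplication by strictly positive scalars.

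More concretely, I would proceed as follows. First, since $\mathcal{N}$ is open in $\mathscr{Z}$ and $\psi \in \mathcal{N}$, there exists $\rho > 0$ such that the ball $B_\rho(\psi) \subset \mathcal{N}$. By \eqref{cone asymptotics}, I can choose $\delta > 0$ so that whenever $0 < |s| < \delta$ and $\n{w}_{\mathscr{W}} < \delta$, we have $\n{G(s,w)/s - \psi}_{\mathscr{Z}} < \rho$, and hence $G(s,w)/s \in \mathcal{N}$.

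Next, I would treat the two sign cases separately. If $s > 0$, then $G(s,w) = s \cdot \big( G(s,w)/s \big)$ is a positive scalar multiple of an element of $\mathcal{N}$, and so lies in $\mathcal{N}$ by the cone property. If $s < 0$, then $-G(s,w) = (-s) \cdot \big( G(s,w)/s \big)$, and now $-s > 0$, so again the cone property gives $-G(s,w) \in \mathcal{N}$. This yields exactly the claim.

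There is no genuine obstacle here: the argument is a direct unpacking of the definitions of open cone and the limit hypothesis, and the only subtlety is keeping track of the sign of $s$ when multiplying through. The continuity of $G$ is used only implicitly (to make sense of the limit in \eqref{cone asymptotics}); what really drives the proof is that openness of $\mathcal{N}$ converts the limit into a stable neighborhood condition, and positive-homogeneity of $\mathcal{N}$ converts the statement about the quotient into one about $G(s,w)$ itself.
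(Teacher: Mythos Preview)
Your proof is correct and follows essentially the same approach as the paper: use openness of $\mathcal{N}$ to conclude $G(s,w)/s \in \mathcal{N}$ for small $(s,w)$ with $s \neq 0$, then multiply by the positive scalar $|s|$ to obtain $\pm G(s,w) \in \mathcal{N}$. The paper compresses your two sign cases into the single observation that $|s|\,G(s,w)/s \in \mathcal{N}$, but the content is identical.
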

\begin{proof}
Since $\mathcal{N}$ is open, \eqref{cone asymptotics} implies that $G(s,w)/s \in \mathcal{N}$ for all $(s,w)$ sufficiently small and $s \neq 0$.  But then $|s| G(s,w)/s$ is also in $\mathcal{N}$, and so the result holds.
\end{proof}

Using this result, the desired monotonicity properties follow quickly from the reduction formula \eqref{u from v} and our understanding of the behavior of $v = u^\varepsilon(\placeholder, 0)$ obtained in the proof of Theorem~\ref{small amplitude theorem}.

\begin{lemma}[Local monotonicity] \label{local monotonicity lemma}
Each $(u, \lambda) \in \cm_\loc$ is a strictly increasing monotone front that exhibits the nodal properties:
  \begin{subequations}
  \label{local nodal properties}
\begin{alignat}{2}
u_x & > 0 \qquad \textup{in } \Omega, \label{local x monotonicity}  \\
u_y & < 0 \qquad \textup{in }   (0,\infty) \times (0, \tfrac{\pi}{2}]. \label{local y monotonicity}
\end{alignat}
  \end{subequations}
\end{lemma}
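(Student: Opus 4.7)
The plan is to parameterize $\cm_\loc$ by $\varepsilon$ and apply the nodal cone Lemma~\ref{nodal cone lemma} separately to the two claims \eqref{local x monotonicity} and \eqref{local y monotonicity}, using the reduction formula \eqref{u from v} and the asymptotic information developed in the proof of Theorem~\ref{small amplitude theorem}.  The first step is to differentiate \eqref{u from v} in $x$ and $y$.  Using the symmetry and expansion conditions \eqref{Psi symmetry}--\eqref{Psi expansion}, together with the rescaling $v^\varepsilon(x) = \varepsilon V^\varepsilon(X)$ and $v^{\varepsilon\prime}(x) = \varepsilon^2 W^\varepsilon(X)$ (where $X = \varepsilon x$, $V^\varepsilon \to V^0 = a_1 \tanh(X/\sqrt{2})$, and $W^\varepsilon \to W^0 = (a_1/\sqrt{2})\sech^2(X/\sqrt{2})$), the contributions $\Psi_A v^{\varepsilon\prime}$ and $\Psi_B v^{\varepsilon\prime\prime}$ turn out to be of higher order in $\varepsilon$, yielding in the rescaled variable $X$ the uniform convergences
\[
  \frac{\partial_x u^\varepsilon(X/\varepsilon,\, y)}{\varepsilon^2} \longrightarrow W^0(X)\cos y, \qquad \frac{\partial_y u^\varepsilon(X/\varepsilon,\, y)}{\varepsilon} \longrightarrow -V^0(X)\sin y.
\]

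For \eqref{local x monotonicity}, I will apply Lemma~\ref{nodal cone lemma} with $s = \varepsilon^2$ in a closed (possibly weighted) H\"older subspace $\mathscr Z$ of functions on $\overline{\R \times \Omega^\prime}$ that vanish on the lateral boundary $\{y = \pm \pi/2\}$ and decay as $|X| \to \infty$.  Take the open cone $\mathcal N_x \subset \mathscr Z$ to consist of functions that are strictly positive in the interior and satisfy a Hopf-type strict sign of $\partial_y$ at $\{y = \pm \pi/2\}$.  The profile $W^0(X)\cos y$ is an interior point of $\mathcal N_x$ (its normal derivative is $\mp W^0 \neq 0$ at $y = \pm \pi/2$), so Lemma~\ref{nodal cone lemma} produces \eqref{local x monotonicity}.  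For \eqref{local y monotonicity}, the leading profile $-V^0(X)\sin y$ vanishes on the boundary lines $\{X = 0\}$ and $\{y = 0\}$ of the target quadrant, so rather than applying Lemma~\ref{nodal cone lemma} directly to $\partial_y u^\varepsilon$ I will apply it to the normalized quantity $\partial_y u^\varepsilon / \bigl(\tanh(\varepsilon x/\sqrt{2})\sin y\bigr)$, whose leading order is the non-vanishing negative constant $-\varepsilon a_1$.  Membership in the cone of strictly negative functions then yields $\partial_y u^\varepsilon < 0$ on $(0, \infty) \times (0, \pi/2)$; the extension to $y = \pi/2$ follows from the smoothness of $u^\varepsilon$ together with the Hopf-type sign of $\partial_y u^\varepsilon$ at $\{y = \pi/2\}$.

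The main obstacle will be the careful choice of Banach space and cone so that the limit profiles are interior points of the cones, despite the unbounded domain and the vanishing of the leading profiles on parts of the target region's boundary.  This forces use of a weighted H\"older norm (for $\partial_x u^\varepsilon$) and/or normalization by the vanishing factors (for $\partial_y u^\varepsilon$) before invoking Lemma~\ref{nodal cone lemma}.
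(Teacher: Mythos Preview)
Your overall strategy---differentiate the reduction formula \eqref{u from v} and feed the result into Lemma~\ref{nodal cone lemma}---matches the paper's, but the implementation is genuinely different.  The paper applies the cone lemma \emph{pointwise in $x$}: for each fixed $x$ the map $G$ takes values in a function space over the compact interval $[-\tfrac{\pi}{2},\tfrac{\pi}{2}]$, with the small parameter $s$ taken to be $B = v'(x)$ (for $u_x$) or $A = v(x)$ (for $u_y$), and the remaining ODE data $(A,C,\varepsilon)$ or $(B,\varepsilon)$ playing the role of the auxiliary variable $w$.  Because $\Psi_A$ and $\Psi_B$ vanish at the origin by \eqref{Psi symmetry}, the quotients $G/B$ and $G/A$ converge to $\varphi_0 = \cos y$ and $\varphi_0' = -\sin y$, which are manifestly interior cone points on the bounded interval.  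The positivity of $v'$ and of $v|_{x>0}$, read off from the phase portrait, then yields \eqref{local x monotonicity} and \eqref{local y monotonicity} directly.

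You instead rescale to $X = \varepsilon x$ and apply the cone lemma once, globally on the unbounded strip, with $s = \varepsilon$ or $\varepsilon^2$.  This is conceptually direct but produces exactly the obstacles you flag: the limiting profiles $W^0(X)\cos y$ and $V^0(X)\sin y$ decay at infinity or vanish on portions of the boundary, forcing weighted norms and normalizing factors.  The normalization $\partial_y u^\varepsilon /\bigl(\tanh(\varepsilon x/\sqrt{2})\sin y\bigr)$ is particularly delicate: you would need the higher-order remainder in \eqref{Psi expansion} to inherit the same vanishing factors after differentiation in $y$, which does not follow from the stated $O((\abs A + \abs B^{1/2} + \abs\varepsilon)^4)$ bound alone (one must invoke the oddness of $\Psi$ in $A$ and evenness in $y$ more carefully).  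The paper's pointwise framing sidesteps all of this: by treating $(v,v',v'',\varepsilon)$ as independent small parameters, the unbounded $x$-direction never enters the cone argument, and no weights or normalizations are required.
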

\begin{proof}  
Consider the $x$-directional monotonicity \eqref{local x monotonicity}.   In view of the center manifold reduction \eqref{u from v} and scalings in \eqref{rescaling}, we have that 
  \begin{equation}
    \label{u_x asymptotic form}
    \partial_x u^\varepsilon(x,y) = v^\prime(x) \varphi_0(y) + \Psi_A(v(x), v^\prime(x), \varepsilon)(0,y) v^\prime(x)  + \Psi_B(v(x), v^\prime(x), \varepsilon)(0,y) v^{\prime\prime}(x),
  \end{equation}
 where recall that $\varphi_0(y) = \cos{y}$, $v = \varepsilon V^\varepsilon(\varepsilon x)$, and $(V^\varepsilon,W^\varepsilon)$ solves the rescaled ODE \eqref{planar reduced ODE}.   From the phase portrait, we have seen that $v$ is strictly increasing (as $W^\varepsilon > 0$) and odd with $v(x) > 0$ for $x > 0$.  

Having \eqref{u_x asymptotic form} in mind, set 
\begin{align*}
\mathscr{W} & := \mathbb{R}^3, \\
\mathscr{Z} &  := \left\{ z \in C^1([-\tfrac{\pi}{2}, \tfrac{\pi}{2}]) : z(-\tfrac{\pi}{2}) = z(\tfrac{\pi}{2}) = 0 \right\}, \\
\mathcal{N} & := \left\{ z \in \mathscr{Z} : z > 0 \textrm{ on } (-\tfrac{\pi}{2}, \tfrac{\pi}{2}),~\mp z^\prime(\pm \tfrac{\pi}{2})  > 0 \right\},
\end{align*}
and consider the mapping $G\maps \mathbb{R} \times \mathscr{W} \to \mathscr{Z}$ defined by
\[ G(B; A, C, \varepsilon) := B \varphi_0 + \Psi_A(A,B,\varepsilon)(0,\placeholder) B + \Psi_B(A,B,\varepsilon)(0,\placeholder) C.\]
  Due to \eqref{Psi symmetry}, $B \mapsto \Psi_B(A,B,\varepsilon)(0,\placeholder)$ is odd.  Since $\Psi_B$ is smooth, there must therefore exist a continuous mapping $\Xi \maps \R^3 \to \mathscr Z$ satisfying
  \[ \Psi_B(A,B,\varepsilon)(0,\placeholder) = \Xi(A,B,\varepsilon) B.\]
Thus,
\[ \frac{G(B; A, C, \varepsilon)}{B} = \varphi_0 + \Psi_A(A,B,\varepsilon)(0,\placeholder) + \Xi(A, B, \varepsilon) C.\]
Sending $(B, A, C, \varepsilon) \to 0$, we find that the left-hand side limits to $\varphi_0 \in \mathcal{N}$.  Applying Lemma~\ref{nodal cone lemma}, we conclude that $\pm G(B; A, C, \varepsilon) \in \mathcal{N}$ for $\pm B > 0$ and $(B, A, C, \varepsilon)$ sufficiently small.  Comparing this to the asymptotics for $\partial_x u^\varepsilon$ in \eqref{u_x asymptotic form}, we must then have that \eqref{local x monotonicity} holds for $0 < \varepsilon \ll 1$.

The argument for the $y$-directional monotonicity is very similar. Again from the representation formula \eqref{u from v} one can compute that
\[
\partial_y u^\varepsilon(x,y) = v(x) \varphi_0^\prime(y) + \Psi_y(v(x), v^\prime(x), \varepsilon)(0,y).
\]
Let us redefine 
\begin{align*}
\mathscr{W} & := \mathbb{R}^2, \\
\mathscr{Z} &  := \left\{ z \in C^1([-\tfrac{\pi}{2}, \tfrac{\pi}{2}]) : z \textrm{ odd} \right\},  \\
\mathcal{N} & := \left\{ z \in \mathscr{Z} : z < 0 \textrm{ on } (0, \tfrac{\pi}{2}],~z^\prime(0)  < 0 \right\},
\end{align*}
and consider the map $G\maps \mathbb{R} \times \mathscr{W} \to \mathscr{Z}$ given by
\[ G(A; B, \varepsilon) := A \varphi_0^\prime + \Psi_y(A,B,\varepsilon)(0,\placeholder).\]
  By \eqref{Psi symmetry}, $A \mapsto \Psi_y(A,B, \varepsilon)(0,\placeholder)$ is odd, so we can find a smooth map $\Upsilon\maps \R^3 \to \mathscr Z$ such that 
  \[ \Psi_y(A,B,\varepsilon)(0,\placeholder) =  \Upsilon(A,B,\varepsilon) A. \]
Proceeding as before, one can then easily check that $G$ satisfies the hypothesis \eqref{cone asymptotics} of Lemma~\ref{nodal cone lemma}, and hence \eqref{local y monotonicity} holds for all $0 < \varepsilon \ll 1$.
\end{proof}

\section{Nodal pattern}\label{nodal section}

While Lemma~\ref{local monotonicity lemma} ensures that the small anti-plane shear fronts obtained in \cite{chen2019center} are strictly increasing, it is not obvious that this property should persist beyond the perturbative regime.  In \cite{chen2020global}, it is nevertheless shown that $\partial_x u$ has a fixed sign on any connected set of solutions that are spectrally non-degenerate in the sense of \eqref{local spectral nondegen}, provided it has a fixed sign at a single solution in this set.   In particular, this implies that the solutions obtained via global continuation will also be monotone in $x$.  The argument relies on repeated use of the maximum principle, and it applies to a broad class of elliptic PDE that includes the anti-plane shear model \eqref{anti-plane shear equation}.  Our equation enjoys an additional symmetry: invariance under reflection in $y$.  This will allow us to infer that $y$-directional monotonicity \eqref{local y monotonicity} is likewise preserved on the global bifurcation curve. Consider the fundamental half-strip
\[ R^+ := (0, \infty) \times (0,\tfrac{\pi}{2}),\]
whose boundary components we denote
\[ B^+ := [0,\infty) \times \{0\}, \qquad T^+ := [0,\infty) \times  \{ \tfrac{\pi}{2} \}, \qquad L^+ := \{0\} \times [0,\tfrac{\pi}{2}].\] 
The main result of this section is the following, where we recall that a front $(u,\lambda)$ is called \emph{strictly increasing} if $\partial_x u > 0$ in $\Omega$.

\begin{theorem}[Nodal properties] \label{nodal properties theorem} 
Suppose that $\mathcal{K} \subset \mathcal{U}_\infty$ is a connected set of strictly increasing monotone fronts and the spectral nondegeneracy condition \eqref{local spectral nondegen} holds along it.  If some $(u,\lambda) \in \mathcal{K}$ exhibits the nodal properties  
  \begin{subequations}
  \label{increasing nodal properties}
\begin{alignat}{2}
u_y & < 0 \qquad \textup{in } \overline{R^+} \setminus (L^+ \cup B^+),\label{u_y negative in R} \\
u_{yy} & < 0 \qquad \textup{on } B^+ \setminus \{(0,0)\}, \label{u_yy negative on interior B} \\
u_{xy} & < 0 \qquad \textup{on } L^+ \setminus \{ (0,0)\}, \label{u_xy negative on interior L} \\
u_{xyy}(0,0) & < 0,  \label{u_xyy corner sign}
\end{alignat}
  \end{subequations}
then every element of $\mathcal{K}$ satisfies \eqref{increasing nodal properties}.  
\end{theorem}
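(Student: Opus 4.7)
The plan is a standard connectedness argument. Set
\begin{equation*}
\mathcal K^\ast := \{(u,\lambda) \in \mathcal K : (u,\lambda) \text{ satisfies \eqref{increasing nodal properties}}\},
\end{equation*}
which is nonempty by hypothesis. Since $\mathcal K$ is connected, it suffices to show that $\mathcal K^\ast$ is both relatively open and relatively closed in $\mathcal K$. This mirrors the approach taken in \cite{chen2020global} to propagate $x$-directional monotonicity, adapted here via the additional $y \mapsto -y$ symmetry and enhanced with the third-order corner condition \eqref{u_xyy corner sign}.

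\emph{Openness} is a soft continuity argument. All inequalities in \eqref{increasing nodal properties} are strict and involve derivatives of $u$ of order at most three, and convergence in $\Xspace_\infty$ provides uniform $C^{3+\alpha}$ control in $x$ (including at the limits $x = \pm\infty$). Strictness is immediate on bounded portions of $\overline{R^+}$; for the behavior as $x \to \infty$, I would appeal to Remark~\ref{limiting remark}, which gives $U_+'(\lambda) < 0$ on $(0,\pi/2]$ and therefore keeps $u_y<0$ stable under small $\mathcal U_\infty$-perturbations.

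The substance is in \emph{closedness}. Take a sequence $(u_n,\lambda_n)\in\mathcal K^\ast$ with $(u_n,\lambda_n)\to(u,\lambda)\in\mathcal K$; passing to the limit yields the non-strict analogues of \eqref{increasing nodal properties}, and the task is to upgrade each to a strict inequality. Set $w := -u_y$ and differentiate \eqref{anti-plane shear equation} in $y$; this produces a linear uniformly elliptic equation $Lw = 0$ on $\Omega$, with principal part controlled by \eqref{ellipticity on W} and zeroth-order coefficient $c = -b_\varkappa(u,\lambda) > 0$ by \eqref{cond on b}. The ``wrong'' sign of $c$ causes no difficulty here since all subsequent applications occur at points where $w$ itself vanishes; the strong maximum principle and Hopf's lemma still apply in their positive-solution forms for arbitrary $c$. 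On $R^+$ one has $w\geq 0$, with $w \equiv 0$ on $L^+$ (since $u \equiv 0$ there) and on $B^+$ (since $u_y$ is odd in $y$ by the evenness of $u$). The strong maximum principle then leaves only two possibilities: either $w \equiv 0$ on $R^+$, which would force $u$ to be $y$-independent and, by $u(\cdot,\pm\pi/2)=0$, identically zero---contradicting $\partial_x u > 0$---or $w > 0$ in the interior of $R^+$.

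With interior positivity of $w$ in hand, the remaining strict boundary inequalities follow from point-wise arguments. Hopf's lemma at interior points of $L^+$ and $B^+$ delivers $u_{xy}<0$ and $u_{yy}<0$, respectively. On an interior point of $T^+$, a putative zero $u_y(x_0,\pi/2) = 0$ would, via Hopf, produce $u_{yy}(x_0,\pi/2) > 0$; but at such a point \eqref{anti-plane shear equation} reduces to $\Delta u = 0$ (since $u$, $u_x$, $u_y$, $u_{xx}$ all vanish there), forcing $u_{yy} = 0$, a contradiction. The corner $(0,\pi/2)$ is handled by an inner-sphere Hopf argument along the diagonal, using that $u_{yy}(0,\pi/2) = 0$ from $u(0,\cdot) \equiv 0$. \emph{The main obstacle is the corner $(0,0)$}, where the symmetries force $w$, $w_x = -u_{xy}$, and $w_y = -u_{yy}$ all to vanish at $(0,0)$, so no first- or second-order Hopf argument can deliver the required strict sign. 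My plan is to invoke Serrin's corner lemma for $w \geq 0$ in the right-angle corner, vanishing on both legs $L^+$ and $B^+$: it outputs $w_{xy}(0,0) > 0$, i.e.\ $u_{xyy}(0,0) < 0$, which is exactly \eqref{u_xyy corner sign}. This is the deepest step, and it explains why the third-order corner condition is included in the nodal hypotheses---it is precisely the first nonvanishing derivative controlling the sign of $u_y$ at $(0,0)$, and Serrin's corner lemma is the tool that propagates it along $\mathcal K$.
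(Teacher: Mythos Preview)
Your overall strategy---connectedness with the open/closed split, the strong maximum principle on $w=-u_y$, Hopf on the flat boundary pieces, and Serrin at the corner $(0,0)$---is exactly the paper's, and your closedness argument is essentially complete and correct. The paper organizes the work slightly differently, first proving a ``Simplified nodal properties'' lemma showing that \eqref{u_y negative in R} alone already implies \eqref{u_yy negative on interior B}--\eqref{u_xyy corner sign}; this streamlines closedness to verifying just \eqref{u_y negative in R}, but the content is the same. Two points deserve tightening.

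\textbf{Openness is not soft.} You write that strictness ``is immediate on bounded portions of $\overline{R^+}$,'' but this is precisely where the argument is delicate. Condition \eqref{u_y negative in R} asks for $u_y<0$ on a set whose closure meets $L^+\cup B^+$, where $u_y$ vanishes identically; a small $C^{3+\alpha}$ perturbation could a priori push $u_y$ positive in a thin collar of $L^+\cup B^+$. This is exactly why the higher-order conditions \eqref{u_yy negative on interior B}--\eqref{u_xyy corner sign} are included in the hypotheses: they are what survive perturbation (being strict on sets where they do not degenerate, or at the single point $(0,0)$), and then a Taylor expansion recovers $u_y<0$ near $L^+\cup B^+$ for the perturbed solution. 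Concretely, $u_y(x,y)\approx u_{yy}(x,0)\,y$ near $B^+\setminus\{(0,0)\}$, $u_y(x,y)\approx u_{xy}(0,y)\,x$ near $L^+\setminus\{(0,0)\}$, and $u_y(x,y)\approx u_{xyy}(0,0)\,xy$ near the origin. The same mechanism is needed to propagate \eqref{u_xy negative on interior L} and \eqref{u_yy negative on interior B} across the corner $(0,0)$, since both degenerate there. The paper carries this out explicitly; your sketch should too, since this is where the ``package'' of nodal conditions earns its keep in openness, not only in closedness.

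\textbf{The corner $(0,\tfrac\pi2)$.} An ``inner-sphere Hopf argument along the diagonal'' does not apply at a convex right-angle corner: no interior ball touches the vertex. You need Serrin's edge-point lemma here as well (just as at $(0,0)$), using that $u_{yy}$, $u_{yxx}$, $u_{yyx}$, $u_{yyy}$ all vanish at $(0,\tfrac\pi2)$ by \eqref{vanishing on T} and \eqref{vanishing on L}, to conclude $u_{xy}(0,\tfrac\pi2)<0$. Alternatively---and this is what the paper does on the interior of $T^+$---apply Hopf to $u$ itself (positive in $R^+$, zero on $T^+$) to get $u_y<0$ on $T^+$ directly, avoiding the contradiction route.
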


We start in the next lemma by establishing some basic information about the boundary behavior of $u$ and its derivatives on $R^+$.
\begin{lemma}[Boundary behavior] Let $(u,\lambda) \in \mathcal{U}_\infty$ be a strictly increasing monotone front solution of \eqref{anti-plane shear equation}.  Then  
  \begin{subequations} \label{preliminary nodal properties}
    \begin{alignat}{2}
      \label{vanishing on T}
      u, \, u_{yy} & = 0  \qquad \textup{on } T^+,\\
      \label{u_y decreasing on T}
      u_{xy} & < 0 \qquad \textup{on } T^+,\\
      \label{vanishing on L}
      u,\, u_{xx} & = 0 \qquad \textup{on } L^+,\\
      \label{vanishing on B}
      u_y & = 0 \qquad \textup{on } B^+.
    \end{alignat}
  \end{subequations}
\end{lemma}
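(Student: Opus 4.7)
My plan is to dispose of the six claims in \eqref{preliminary nodal properties} in order of increasing difficulty: first the three identities that follow from the Dirichlet boundary condition together with the reflection symmetries built into $\mathcal{U}_\infty$; then the two identities that come from a direct substitution into the PDE \eqref{anti-plane shear equation}; and finally the strict inequality \eqref{u_y decreasing on T}, which I would obtain from the Hopf boundary-point lemma.

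The three easy identities are $u=0$ on $T^+$ (immediate from the Dirichlet condition on $\partial\Omega$, since $T^+\subset\partial\Omega$), $u=0$ on $L^+$ (immediate from the oddness of $u$ in $x$), and $u_y=0$ on $B^+$ (immediate from the evenness of $u$ in $y$). Differentiating the first two identities tangentially along their respective sets gives for free the auxiliary facts $u_x = u_{xx}=0$ on $T^+$ and $u_y = u_{yy} = 0$ on $L^+$, which I will reuse in the next step.

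For $u_{yy}=0$ on $T^+$ and $u_{xx}=0$ on $L^+$, my plan is simply to expand the divergence in \eqref{anti-plane shear equation} and evaluate on each set. On $T^+$, the vanishing of $u$, $u_x$, $u_{xx}$ collapses the left-hand side to $u_{yy}\bigl[\strainW'(u_y^2)+2u_y^2\strainW''(u_y^2)\bigr]$, which must equal $b(0,\lambda)=0$ by the oddness \eqref{b odd}; the bracketed coefficient is strictly positive by the ellipticity \eqref{ellipticity on W}, forcing $u_{yy}=0$. The parallel calculation on $L^+$, this time using $u = u_y = u_{yy}=0$, yields $u_{xx}=0$ by exactly the same mechanism with the roles of $x$ and $y$ exchanged.

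The one genuinely nontrivial step is \eqref{u_y decreasing on T}. My plan is to differentiate \eqref{anti-plane shear equation} in $x$, producing a homogeneous linear second-order equation $Lu_x = 0$ whose principal part is elliptic by \eqref{ellipticity on W} and whose coefficients are H\"older continuous since $u\in C^{3+\alpha}_\bdd$. By hypothesis $u_x > 0$ throughout $\Omega$ while $u_x\equiv 0$ on the flat boundary segment $T^+$, so the interior sphere condition is automatic and Hopf's lemma yields a strictly negative outward normal derivative $\partial_\nu u_x < 0$ at each point of $T^+$; since $\nu = e_y$, this is exactly $u_{xy}<0$. The one delicate point is that the linearization carries a zero-order coefficient $-b_\varkappa(u,\lambda)$ whose sign we cannot control a priori. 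I expect this to be the only real obstacle, and it will be handled by invoking the standard variant of Hopf that drops the sign hypothesis on the zero-order term precisely when the touching value vanishes, which is exactly our situation on $T^+$.
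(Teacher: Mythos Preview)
Your proposal is correct and follows essentially the same route as the paper's proof: the identities come from the Dirichlet condition, the reflection symmetries, and evaluation of the PDE on $T^+$ and $L^+$ using ellipticity \eqref{ellipticity on W}, while \eqref{u_y decreasing on T} is obtained by noting that $u_x$ solves the linearized equation \eqref{linearized PDE} and applying the Hopf lemma for positive solutions at the boundary minimum $u_x=0$ on $T^+$. Your remark that the sign of the zero-order coefficient $-b_\varkappa(u,\lambda)$ is irrelevant because the touching value of $u_x$ is zero is exactly the version of Hopf the paper invokes (``Hopf lemma (for positive solutions)'').
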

\begin{proof}
  Most of these are immediate consequences of the boundary conditions or symmetry.  Indeed, $T^+ \subset \partial\Omega$, hence $u$ vanishes identically there.  This implies further that $\partial_x^k u = 0$ on $T^+$ for all $k \geq 0$, so evaluating the PDE \eqref{anti-plane shear equation} along the top yields
  \[ \LB \strainW'(u_y^2) + 2u_y^2\strainW''(u_y^2) \RB u_{yy} = 0 \qquad \textrm{on } T^+.\] 
  From \eqref{ellipticity on W}, this gives $u_{yy} = 0$ on $T^+$, proving \eqref{vanishing on T}. 
  The argument for \eqref{vanishing on L} is similar: $u$ (and hence $\partial_y^k u$) vanishes identically along $L^+$ by oddness.  Using this and the equation \eqref{anti-plane shear equation} gives
  \[ \LB \strainW'(u_x^2) + 2u_x^2\strainW''(u_x^2) \RB u_{xx} = 0 \qquad \textrm{on } L^+.\]

  That $u_y$ vanishes on $B$ is likewise a consequence the evenness of $u$ in $y$.  Finally, to obtain \eqref{u_y decreasing on T}, we observe that $u_x$ is in the kernel of the linearized operator at $(u,\lambda)$.  That is, $v := u_x$ satisfies 
  \begin{equation}
    \nabla \cdot \left( \strainW'(|\nabla u|^2) \nabla v + 2 \strainW''(|\nabla u|^2) (\nabla u \otimes \nabla u) \nabla v \right)   - b_\varkappa(u, \lambda) v = 0 \qquad \textrm{in } R^+. \label{linearized PDE} 
  \end{equation}
  By assumption, $u_x =v > 0$, and thus it attains its minimum along $T^+$.  Recalling \eqref{ellipticity on W} and \eqref{cond on b}, the sign of $u_{xy}$ in \eqref{u_y decreasing on T} then follows from the Hopf lemma (for positive solutions).  
\end{proof}

Using the above identities and the maximum principle, we can show that the full set of nodal properties \eqref{increasing nodal properties} can be collapsed to just transversal monotonicity \eqref{u_y negative in R}.

\begin{lemma}[Simplified nodal properties] \label{simplified nodal properties lemma} 
  Suppose that $(u,\lambda) \in \mathcal{U}_\infty$ is a strictly monotone increasing front solution of \eqref{anti-plane shear equation} that satisfies \eqref{u_y negative in R}.  Then $u$ also satisfies \eqref{u_yy negative on interior B}--\eqref{u_xyy corner sign}. 
\end{lemma}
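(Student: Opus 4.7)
The plan is to observe that $w := u_y$ satisfies a linear uniformly elliptic equation obtained by differentiating \eqref{anti-plane shear equation} in $y$, and then to extract each of \eqref{u_yy negative on interior B}--\eqref{u_xyy corner sign} by applying the appropriate boundary-point or corner-point maximum principle. A direct computation yields
\begin{equation*}
L w := \nabla \cdot \LB \strainW'(|\nabla u|^2) \nabla w + 2 \strainW''(|\nabla u|^2) (\nabla u \cdot \nabla w) \nabla u \RB - b_\varkappa(u, \lambda)\, w = 0,
\end{equation*}
and the ellipticity assumption \eqref{ellipticity on W} together with the boundedness of $\nabla u$ ensure that $L$ is uniformly elliptic on $\overline{R^+}$ with bounded coefficients. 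By the oddness of $u$ in $x$ and evenness in $y$, $w$ vanishes identically on $L^+ \cup B^+$, while \eqref{u_y negative in R} gives $w < 0$ throughout the interior of $R^+$.

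For \eqref{u_yy negative on interior B} and \eqref{u_xy negative on interior L}, I would apply the Hopf boundary-point lemma to $-w$ at smooth interior points of $B^+$ and $L^+$. Both $B^+\setminus\{(0,0)\}$ and $L^+\setminus\{(0,0),(0,\tfrac{\pi}{2})\}$ consist of smooth boundary points at which $R^+$ satisfies an interior ball condition, and because $-w$ attains the minimum boundary value $0$, no sign hypothesis is required on the zero-order coefficient of $L$. Consequently, the outward normal derivative of $-w$ is strictly negative, which translates into $u_{yy} < 0$ on $B^+\setminus\{(0,0)\}$ and $u_{xy} < 0$ on $L^+\setminus\{(0,0),(0,\tfrac{\pi}{2})\}$. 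The remaining endpoint $(0,\tfrac{\pi}{2})$ lies on $T^+$ and is already covered by the boundary identity \eqref{u_y decreasing on T}.

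For the corner inequality \eqref{u_xyy corner sign}, I would combine a Taylor expansion with Serrin's corner (edge-point) lemma. Since $u$ is odd in $x$, even in $y$, and vanishes along $L^+$ and $B^+$ to all orders in the tangential direction, the Taylor series at $(0,0)$ starts with
\begin{equation*}
u(x,y) = u_x(0,0)\, x + \tfrac{1}{6} u_{xxx}(0,0)\, x^3 + \tfrac{1}{2} u_{xyy}(0,0)\, x y^2 + O\bigl((x^2+y^2)^{5/2}\bigr),
\end{equation*}
so that $w(x,y) = u_{xyy}(0,0)\, x y + O((x^2+y^2)^{3/2})$. The sign condition $w < 0$ for small $x,y > 0$ therefore forces $u_{xyy}(0,0) \le 0$. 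To rule out equality, Serrin's edge-point lemma applied to $-w$ at the right-angle corner $(0,0)$ of $R^+$ asserts that, along any unit direction $\nu$ pointing into $R^+$, either $\partial_\nu(-w)(0,0) > 0$ or $\partial_\nu^2(-w)(0,0) > 0$. The Taylor expansion above shows that every first-order directional derivative of $-w$ vanishes at $(0,0)$, so the second derivative along the bisector must be strictly positive; since this second derivative is a positive multiple of $-u_{xyy}(0,0)$, we conclude $u_{xyy}(0,0) < 0$.

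The main obstacle is the last step at the corner $(0,0)$, where the interior ball condition fails and the standard Hopf lemma is unavailable. Serrin's corner lemma is the natural substitute, and as in the earlier applications the vanishing of $w$ at the corner spares us from imposing any sign restriction on the zero-order coefficient $-b_\varkappa$, which would otherwise conflict with the sign dictated by \eqref{cond on b}.
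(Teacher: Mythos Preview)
Your proposal is correct and follows essentially the same route as the paper: differentiate in $y$, apply the Hopf boundary-point lemma along the flat pieces of $\partial R^+$, and invoke Serrin's edge-point lemma at the corner $(0,0)$ after checking that the first-order derivatives of $u_y$ vanish there. The one place you diverge is at the upper corner $(0,\tfrac\pi2)$: the paper reapplies Serrin's lemma there, whereas you simply cite the already-established inequality \eqref{u_y decreasing on T}, which indeed covers $(0,\tfrac\pi2)\in T^+$ since that result was obtained by applying Hopf in the full strip $\Omega$ rather than in $R^+$; your shortcut is valid and slightly more economical.
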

\begin{proof}
  Differentiating the PDE \eqref{anti-plane shear equation} in $y$, we see that  $v = u_y$ satisfies the linear elliptic equation \eqref{linearized PDE}. Moreover, $u_y < 0$ in $R^+$ according to \eqref{u_y negative in R}, so $u_y$ attains its supremum on $R^+$ at $(0,0)$ thanks to  \eqref{vanishing on B}.  Combining \eqref{vanishing on L} and \eqref{vanishing on B}, we infer that
  \begin{equation}
    u_{yx}, \, u_{yy}, \, u_{yxx}, \, u_{yyy} = 0 \qquad \textrm{at } (0,0).\label{u_y origin expansion} 
  \end{equation}
  But this is in violation of the Serrin edge-point lemma as $\nabla u_y$ and $D^2 u_y$ cannot simultaneously vanish at a maximum; see, for example, \cite[Theorem E.9]{fraenkel2000introduction}.  Having arrived at a contradiction, we must have that \eqref{u_xyy corner sign} holds.  Likewise, since $u_y$ vanishes identically along $L^+ \cup B^+$ and is negative in $R^+$,  we obtain \eqref{u_yy negative on interior B} directly from the Hopf boundary-point lemma. The same argument shows that $u_{xy} < 0$ on $L^+$ except at the lower corner $(0,0)$ and possibly at the upper corner $(0,\frac{\pi}{2})$.  But notice that $u_y$ attains its minimum at $(0,\frac{\pi}{2})$, and moreover \eqref{vanishing on T} and \eqref{vanishing on L} give
  \begin{equation}
    u_{yy}, u_{yxx}, \, u_{yyx}, \, u_{yyy} = 0 \qquad \textrm{at } (0,\tfrac{\pi}{2}). \label{} 
  \end{equation}
  Thus the Serrin edge-point lemma ensures that $u_{yx}(0,\frac{\pi}{2}) < 0$, proving  \eqref{u_xy negative on interior L}.
\end{proof}

For later use, we observe that the above lemma together with Lemmas~\ref{spectral nondegeneracy lemma} and \ref{local monotonicity lemma} has the following immediate corollary. 
\begin{corollary}[Local nodal pattern] \label{local nodal corollary} 
Each $(u, \lambda) \in \cm_\loc$ is a strictly increasing monotone front that exhibits the full set of nodal properties \eqref{increasing nodal properties}.
\end{corollary}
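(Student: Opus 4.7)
The plan is straightforward: the corollary simply packages together two earlier facts established in this section, with no new machinery required. First, I would invoke Lemma~\ref{local monotonicity lemma}, which asserts that each $(u,\lambda)\in\cm_\loc$ satisfies $u_x>0$ on $\Omega$ (hence is a strictly increasing monotone front) as well as the transversal inequality $u_y<0$ on $(0,\infty)\times(0,\tfrac{\pi}{2}]$.

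Next I would observe the set identity
\[
\overline{R^+} \setminus (L^+\cup B^+) \;=\; (0,\infty)\times(0,\tfrac{\pi}{2}],
\]
which is immediate from the definitions of $R^+$, $L^+$, and $B^+$. Consequently the conclusion of Lemma~\ref{local monotonicity lemma}\eqref{local y monotonicity} is literally the nodal statement \eqref{u_y negative in R}. With \eqref{u_y negative in R} in hand, Lemma~\ref{simplified nodal properties lemma} applies verbatim and delivers the remaining nodal conditions \eqref{u_yy negative on interior B}, \eqref{u_xy negative on interior L}, and \eqref{u_xyy corner sign}, completing the full set \eqref{increasing nodal properties}.

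There is no genuine obstacle here; the substantive work has already been carried out in Lemmas~\ref{local monotonicity lemma} and \ref{simplified nodal properties lemma}. The corollary is stated separately only for convenient reference in the global bifurcation argument of Section~\ref{global section}, where Theorem~\ref{nodal properties theorem} is applied to propagate the nodal pattern along the global curve. For that application one needs precisely the combination delivered here: strict monotonicity, the nodal properties \eqref{increasing nodal properties} at a base point, and the spectral non-degeneracy supplied independently by Lemma~\ref{spectral nondegeneracy lemma}.
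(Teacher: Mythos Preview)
Your proposal is correct and matches the paper's approach: the paper likewise states the corollary as an immediate consequence of Lemma~\ref{local monotonicity lemma} and Lemma~\ref{simplified nodal properties lemma}. The paper additionally cites Lemma~\ref{spectral nondegeneracy lemma} in its one-line justification, but as you implicitly observe, that lemma is not actually needed for the corollary as stated---it is only required later when invoking Theorem~\ref{nodal properties theorem}.
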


The next two lemmas show that these nodal properties are (relatively) open and closed in a suitable topology. The main tools are \cite[Lemmas 2.7 and 2.8]{chen2020global}, which require the spectral nondegeneracy condition \eqref{local spectral nondegen}.

\begin{lemma}[Open property] 
  Suppose that $(\bar u,\bar \lambda) \in \mathcal{U}_\infty$ is a strictly increase  monotone front solution of \eqref{anti-plane shear equation} that satisfies \eqref{local spectral nondegen} and \eqref{increasing nodal properties}.  There exists $\delta = \delta(\bar u, \bar\lambda) > 0$ such that, if $(u,\lambda) \in \mathcal{U}_\infty$ is a solution with 
  \begin{equation}
    \| u - \bar u \|_{C^3(R^+)} + |\lambda - \bar\lambda | < \delta, \qquad \lim_{x \to \infty} u = U_+(\lambda), \label{u bar u close} 
  \end{equation}
  then $u$ is a strictly increasing monotone front and satisfies \eqref{increasing nodal properties}. 
\end{lemma}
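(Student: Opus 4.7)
The plan is to use Lemma~\ref{simplified nodal properties lemma} to reduce the full nodal pattern \eqref{increasing nodal properties} to the two statements (i) $u_x > 0$ in $\Omega$ and (ii) $u_y < 0$ on $\overline{R^+} \setminus (L^+ \cup B^+)$, and then to preserve each under the $C^3$-small perturbation. The boundary identities \eqref{preliminary nodal properties} hold automatically for any solution with the stated symmetries. The decisive tool is \cite[Lemmas 2.7 and 2.8]{chen2020global}, designed precisely for transferring strict sign properties under $C^3$-small perturbations when the spectral nondegeneracy \eqref{local spectral nondegen} is available.

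For (i), the symmetries of $u$ reduce matters to showing $u_x > 0$ on $[0,\infty)\by[0,\frac{\pi}{2})$. On any compact subset $K$ of this set the strict positivity of $\bar u_x$ furnishes a uniform lower bound $\bar u_x \ge c_K > 0$, and taking $\delta < c_K/2$ forces $u_x > 0$ on $K$. The nontrivial region is the tail $\{x \ge M\}$, where $u \to U_+(\lambda)$ in $C^3$ as $x \to \infty$ and so $u_x \to 0$. Here \eqref{local spectral nondegen} is essential: since $u_x$ satisfies the linearized equation \eqref{linearized PDE} and its behavior at infinity is governed by $\limL_+^\prime(u,\lambda)$, whose principal eigenvalue $\prineigenvalue^+(u,\lambda)$ depends continuously on $(u,\lambda)$ and so remains strictly negative under small perturbation, \cite[Lemma 2.7]{chen2020global} lets us compare $u_x$ in the tail with a positive multiple of the positive principal eigenfunction of $\limL_+^\prime(\bar u,\bar\lambda)$ and conclude $u_x > 0$ there. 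Gluing the compact-region and tail information via the strong maximum principle, together with the Hopf lemma at $T^+$ (which gives $u_{xy}(x,\frac{\pi}{2}) < 0$ as in \eqref{u_y decreasing on T}), then propagates $u_x > 0$ throughout $[0,\infty)\by[0,\frac{\pi}{2})$, and by symmetry to all of $\Omega$.

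Statement (ii) on $\overline{R^+}\setminus(L^+\cup B^+) = (0,\infty)\by(0,\frac{\pi}{2}]$ is handled analogously. On compact subsets away from $L^+$, $B^+$, and $\{x=\infty\}$, the strict inequality $\bar u_y < 0$ transfers to $u_y < 0$ by $C^3$-closeness. Approach to the boundary pieces $L^+$ and $B^+$ (where $u_y$ vanishes by symmetry) is controlled by the strict inequalities \eqref{u_yy negative on interior B} and \eqref{u_xy negative on interior L}, which persist for $u$ under $C^3$-small perturbation and, combined with $u_y = 0$ on $L^+ \cup B^+$, yield $u_y < 0$ just inside $R^+$. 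The corner $(0,0)$ is handled by the persistence of $\bar u_{xyy}(0,0) < 0$ together with the Taylor-expansion argument underlying Lemma~\ref{simplified nodal properties lemma}. Finally, in the tail, Remark~\ref{limiting remark} gives $\partial_y U_+(\lambda) < 0$ strictly on $(0,\frac{\pi}{2}]$ and $U_+(\lambda)$ varies continuously in $\lambda$, so uniform $C^3$ convergence of $u(x,\placeholder)$ to $U_+(\lambda)$ delivers $u_y < 0$ on $\{x\ge M\}\cap (0,\infty)\by (0,\frac{\pi}{2}]$ for $M$ large.

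The principal obstacle is the noncompactness of $R^+$ in the $x$-direction: strict inequalities can a priori degenerate as $x \to \infty$, and a purely compactness-based continuity argument is not enough. The spectral nondegeneracy hypothesis \eqref{local spectral nondegen} is precisely what rescues us, via \cite[Lemmas 2.7 and 2.8]{chen2020global}: it forces an exponential rate of convergence at infinity that is itself stable under perturbation, enabling a quantitative comparison of $u_x$ and $u_y$ with positive principal eigenfunctions of the limiting transversal operator on a sufficiently large tail.
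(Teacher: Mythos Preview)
Your proposal is correct and follows essentially the same approach as the paper: reduce via Lemma~\ref{simplified nodal properties lemma} to establishing $u_x>0$ and $u_y<0$, obtain strict monotonicity in $x$ from \cite[Lemma~2.7]{chen2020global} after noting that the principal eigenvalue perturbs continuously, handle $u_y$ on the tail using the strict sign of $\partial_y U_+(\lambda)$ from Remark~\ref{limiting remark}, and treat the neighborhoods of $L^+$, $B^+$, and the corner $(0,0)$ by Taylor expansion using the persistence of the strict inequalities \eqref{u_yy negative on interior B}--\eqref{u_xyy corner sign}. One minor remark: only \cite[Lemma~2.7]{chen2020global} is actually needed here (Lemma~2.8 is the closed-property tool used in the next lemma), and you need not invoke the strong maximum principle to ``glue'' regions---once $u_x>0$ is known on the tail and on a compact core, it holds everywhere by construction.
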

\begin{proof}
  By Corollary~\ref{limiting corollary}, the limiting state as $x \to +\infty$ corresponding to  
  $(\bar u, \bar \lambda)$ is $\bar U_+ := U_+(\bar \lambda)$.  Now by Remark~\ref{limiting remark}, for any $\bar \lambda \geq \lambda_0 > 0$, we have 
  \[ \bar U_{+y} < 0 \quad \textrm{on } (0,\tfrac{\pi}{2}], \qquad  \bar U_{+y}(\tfrac{\pi}{2}) < -\delta_1, \qquad \bar U_{+ yy}(0) < -\delta_2,\]
  for some $\delta_1, \delta_2 > 0$ depending only on $\lambda_0$.  It follows from this, \eqref{u_y negative in R}, and \eqref{u_yy negative on interior B} that, for any $\varepsilon > 0$, there exists $\delta = \delta(\lambda_0, \varepsilon)$ so that 
  \[ u_y < 0 \qquad \textrm{on }  (\varepsilon, \infty) \times [0, \pi/2],\]
  for all $(u,\lambda)$ satisfying \eqref{u bar u close}.  Note that the principal eigenvalues depend continuously on the coefficients of the operator (see, for example, \cite[Lemma A.2]{chen2020global}), hence $\prineigenvalue^\pm(\bar u, \bar \lambda) < 0$ implies $\prineigenvalue^\pm(u, \lambda) < 0$ for $\delta$ small enough.  Perhaps shrinking it even further, we can then ensure that $(u,\lambda)$ is a strictly monotone front by applying \cite[Lemma 2.7]{chen2020global}.

  Consider next the sign of $u_y$ in a neighborhood of $L^+$.    As in the proof of Lemma~\ref{simplified nodal properties lemma},  we see that \eqref{vanishing on L} and \eqref{vanishing on B} imply that its derivatives at $(0,0)$ satisfy \eqref{u_y origin expansion}.  Expanding $u_y$ at the origin gives
  \[ u_y(x,y) = \frac{1}{2} u_{yyx}(0,0) xy + O(|x|^3 + |y|^3) \qquad \textrm{in } R^+.\]
  But $\bar u_{xyy}(0,0) < 0$ according to \eqref{u_xyy corner sign}, and so by perhaps further shrinking $\delta$, we can ensure that $u_y < 0$ on $\mathcal{O} \cap R^+$, for some open ball $\mathcal{O} \ni (0,0)$.   Let us now fix $\varepsilon$ to be half the radius of $\mathcal{O}$.  

  Similarly, choosing any point $(0,y_0) \in L^+ \setminus \mathcal{O}$, we have 
  \[ u_y(x,y) = u_{xy}(0,y_0) x + O(x^2 + (y-y_0)^2) \qquad \textrm{in } R^+.\]
  In view of \eqref{u_xy negative on interior L}, we may then shrink $\delta$ further so that 
  \[ u_y < 0 \qquad \textrm{on } \left(  (0, \varepsilon) \times [0,\pi/2] \right) \setminus \mathcal{O}.\]
  At last, then, we have shown that $u$ satisfies \eqref{u_y negative in R}.  Applying Lemma~\ref{simplified nodal properties lemma}, we conclude that $u$ exhibits all of the nodal properties \eqref{increasing nodal properties}.   
\end{proof}

\begin{lemma}[Closed property]  
  Suppose that $\{ (u_n, \lambda_n) \} \subset \mathcal{U}_\infty$ is a sequence of strictly monotone increasing front solutions to \eqref{anti-plane shear equation} that are each spectrally non-degenerate \eqref{local spectral nondegen} and $(u_n, \lambda_n) \to (u,\lambda)$ in $\Xspace_\bdd \times \R$ for some $(u,\lambda) \in \mathcal{U}_\infty$ also satisfying \eqref{local spectral nondegen}. If each $u_n$ exhibits the nodal properties \eqref{increasing nodal properties}, then so does $u$.
\end{lemma}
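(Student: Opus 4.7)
The plan is to pass the inequalities in \eqref{increasing nodal properties} to the limit as weak (non-strict) ones via the $C^{3+\alpha}_{\bdd}$ convergence, and then upgrade them to the required strict forms using the strong maximum principle and Hopf boundary-point lemma applied to $v := u_y$. By Lemma~\ref{simplified nodal properties lemma}, it suffices to establish the transversal monotonicity \eqref{u_y negative in R}; the remaining conditions \eqref{u_yy negative on interior B}--\eqref{u_xyy corner sign} then follow automatically.

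First, the $C^{3+\alpha}_\bdd$ convergence immediately delivers the weak inequality $u_y \leq 0$ on $\overline{R^+} \setminus (L^+ \cup B^+)$, together with $u_x \geq 0$ in $\Omega$. Since $(u,\lambda)$ is spectrally non-degenerate by assumption, I would invoke \cite[Lemma~2.8]{chen2020global} to conclude that strict $x$-monotonicity persists in the limit, so $u_x > 0$ in $\Omega$. In particular $u$ cannot be $y$-independent, for otherwise the Dirichlet condition on $T^+$ would force $u \equiv 0$, contradicting $u_x > 0$.

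Next, differentiating \eqref{anti-plane shear equation} in $y$ shows that $v = u_y$ solves the linearized equation \eqref{linearized PDE} throughout $\Omega$. Rewriting it as $\nabla \cdot (A \nabla v) = b_\varkappa(u,\lambda) v$, ellipticity \eqref{ellipticity on W} ensures $A$ is uniformly positive definite, while \eqref{cond on b} gives $b_\varkappa(u,\lambda) < 0$. Wherever $v \leq 0$, the right-hand side is nonnegative, so $v$ is a subsolution of the purely second-order operator $\nabla \cdot (A \nabla \placeholder)$ on $R^+$. If $v$ were to vanish at an interior point of $R^+$, the strong maximum principle would force $v \equiv 0$ on $R^+$; then $u$ would be $y$-independent on $R^+$, and the Dirichlet condition on $T^+$ would force $u \equiv 0$ on $R^+$, contradicting $u_x > 0$. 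Hence $v < 0$ in $R^+$.

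Finally, to extend strictness to $T^+ \setminus \{(0,\tfrac{\pi}{2})\}$, I would argue by contradiction: if $v(x_0,\tfrac{\pi}{2}) = 0$ for some $x_0 > 0$, this is a strict boundary maximum of $v$, and the Hopf lemma applied to the linearized PDE yields $u_{yy}(x_0, \tfrac{\pi}{2}) = \partial_y v(x_0, \tfrac{\pi}{2}) > 0$. But \eqref{vanishing on T} forces $u_{yy} \equiv 0$ on $T^+$, a contradiction. This establishes \eqref{u_y negative in R}, after which Lemma~\ref{simplified nodal properties lemma} upgrades it to the full set of nodal properties \eqref{increasing nodal properties}. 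The hardest part will be handling the unfavorable sign of the zeroth-order coefficient $-b_\varkappa > 0$ in the linearized equation, which I would resolve by moving $b_\varkappa v$ to the right-hand side and exploiting the sign of $v$ to reduce to a purely second-order subsolution argument; ruling out the degenerate alternative $v \equiv 0$ then depends critically on \cite[Lemma~2.8]{chen2020global} for the preservation of strict $x$-monotonicity in the limit.
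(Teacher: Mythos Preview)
Your proposal is correct and follows essentially the same approach as the paper: reduce to \eqref{u_y negative in R} via Lemma~\ref{simplified nodal properties lemma}, pass the inequality to the limit, invoke \cite[Lemma~2.8]{chen2020global} to retain strict $x$-monotonicity, and then use the strong maximum principle on $v=u_y$ in $R^+$ followed by a Hopf argument on $T^+$. The only cosmetic difference is on $T^+$: you apply Hopf to $v=u_y$ and contradict $u_{yy}=0$ from \eqref{vanishing on T}, whereas the paper applies Hopf directly to the positive solution $u$ at its boundary minimum to get $u_y<0$; both are valid.
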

\begin{proof}
  Since \eqref{local spectral nondegen} holds by assumption, \cite[Lemma 2.8]{chen2020global} ensures that $(u,\lambda)$ is also a strictly increasing monotone front.  In view of Lemma~\ref{simplified nodal properties lemma}, it therefore suffices to show that $u$ satisfies \eqref{u_y negative in R}.   By continuity, we have that $u_y \leq 0$ in $\overline{R^+}$. We have already seen that $u_y$ satisfies the linear elliptic PDE \eqref{linearized PDE}, and since $\sup_{R^+}{u_y} = 0$, we may apply the maximum principle to conclude that $u_y < 0$ in $R^+$.  

  Fix a point $(x_0, \frac{\pi}{2})$ with $x_0 > 0$.  By monotonicity, $u$ is a positive solution of the elliptic PDE \eqref{anti-plane shear equation}, which can be viewed as linear.  Since $u$ attains its minimum value at $(x_0, \frac{\pi}{2})$, we have by the Hopf boundary-point lemma that $u_y < 0$ there.  Together with the previous paragraph, this proves that $u$ satisfies \eqref{u_y negative in R} and hence \eqref{increasing nodal properties}.  
\end{proof}

Combining these lemmas, the proof of Theorem~\ref{nodal properties theorem} is now immediate.

\section{Global continuation}\label{global section}

\subsection{Abstract global bifurcation theory} \label{abstract bifurcation section}

For the convenience of the reader, we record here the main tool for the proof of Theorem~\ref{main theorem}, which is the following general theorem on global bifurcation of monotone front solutions to elliptic PDE.   To simplify the presentation, we will only discuss its application to the specific problem \eqref{anti-plane shear equation}.  

Suppose that we have a local curve $\mathscr{K}_\loc$ of strictly monotone front solutions to \eqref{anti-plane shear equation}.  Due to translation invariance, for any $(u,\lambda) \in \mathscr{K}_\loc$, the linearized operator $\F_u(u,\lambda)$ will have at least one kernel direction.  As the first assumption, we require that the kernel is \emph{exactly} one dimensional:
\begin{equation}
  \ker \F_u(u,\lambda) = \linspan \{ \partial_x u \}.  \label{kernel assumption}\tag{H1} 
\end{equation}
We also impose a requirement on the spectrum of the transversal linearized operator:
\begin{equation}
  \prineigenvalue^\pm(u,\lambda)  < 0.  \label{spectral assumption}\tag{H2} 
\end{equation}
Note that, as observed before, the spectrum of the transversal limiting problems at $x = -\infty$ and $x =+\infty$ are identical, so there is no ambiguity above.  Lastly, assume that the local curve bifurcates from a singular point where the above spectral condition is violated.   That is, suppose $\mathscr{K}_\loc$ admits the $C^0$ parameterization
\[ 
\mathscr{K}_\loc = \left\{ \left( u(\varepsilon), \lambda(\varepsilon) \right) : 0 < \varepsilon < \varepsilon_0 \right\}  \subset \mathcal{U},
\]
where 
\begin{equation}
    (u(\varepsilon), \lambda(\varepsilon)) \to (u_0,\lambda_0) \in \mathcal{U} \quad \text{as } \varepsilon \to {0+}, \quad \textrm{and} \quad  
 \prineigenvalue^\pm(u_0,\lambda_0)=0.
  \label{local singular assumption} 
  \tag{H4} 
\end{equation}

Under the above hypotheses, we have the following global continuation result.  It corresponds to \cite[Theorem 1.2]{chen2020global} combined with \cite[Lemma 3.4]{chen2020global}.

\begin{theorem} \label{abstract global bifurcation theorem}
  Let $\mathscr{K}_\loc$ be a curve of strictly monotone front solutions to \eqref{anti-plane shear equation} bifurcating from a singular point as in \eqref{local singular assumption}.  Assume that at each $(u,\lambda) \in \mathscr{K}_\loc$, the nondegeneracy \eqref{kernel assumption} and spectral \eqref{spectral assumption} conditions hold.  

Then, possibly after translation, $\mathscr{K}_\loc$ is contained in a global curve of strictly monotone front solutions $\mathscr{K} \subset \mathcal{U}_\infty$, parameterized as 
\[ \mathscr{K} := \left\{ \left(u(s), \lambda(s) \right) : 0 < s < \infty  \right\} \subset \mathscr{F}^{-1}(0)\]
for some continuous $\R_+ \ni s \longmapsto (u(s), \lambda(s)) \in \mathcal{U}_\infty$ with the  properties enumerated below.
\begin{enumerate}[label=\rm(\alph*)]
\item \label{gen ift alternatives} \textup{(Alternatives)} As $s \to \infty$, one of three alternatives must occur:       
  \begin{enumerate}[label=\rm(A\arabic*)]
  \item \textup{(Blowup)} \label{gen blowup alternative}
    The quantity 
    \begin{align}
      \label{gen global blowup}
      N(s):= \n{u(s)}_{C^{3+\alpha}} + |\lambda(s)| + \frac 1{|\lambda(s)|} \longrightarrow \infty.
    \end{align}
  \item \label{gen hetero degeneracy} \textup{(Heteroclinic degeneracy)} There exist sequences $s_n \to \infty$ and $x_n \to \pm \infty$ with
    \[ \left(u(s_n)(\placeholder + x_n,\placeholder),\, \lambda(s_n) \right) \longrightarrow (u_*,\lambda_*) \textup{ in } C^{3}_\loc(\overline\Omega) \times \R^2\] 
    for some monotone front solution $(u_*,\lambda_*) \in \genU_\infty$, but the three limiting states
   \begin{equation*}
     \lim_{x \to \mp\infty} u_*(x, \placeholder),
     \quad 
     \lim_{n \to \infty} \lim_{x \to +\infty} u(s_n)(x, \placeholder),
     \quad 
     \lim_{n \to \infty} \lim_{x \to -\infty} u(s_n)(x, \placeholder),
   \end{equation*}    
   are all distinct and pairwise conjugate in the sense of \eqref{def conjugate flows}.
 \item \label{gen ripples} \textup{(Spectral degeneracy)}    There exists a sequence $s_n \to \infty$ with $\sup_{n} N(s_n) < \infty$ so that 
   \[
     \prineigenvalue^\pm(u(s_n), \lambda(s_n)) \to 0.  
   \]
  \end{enumerate}
\item \label{gen reconnect} For all $s$ sufficiently large, $(u(s),\lambda(s)) \not\in \mathscr{K}_\loc$.  In particular, $\mathscr{K}$ is not a closed loop.
\item \label{gen analytic} At each parameter value $s \in (0,\infty)$, $\mathscr{K}$ admits a local real-analytic reparameterization.
\end{enumerate}
\end{theorem}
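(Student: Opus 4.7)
The strategy is to invoke the abstract global bifurcation theory of \cite{chen2020global}, as the statement itself suggests: \cite[Theorem~1.2]{chen2020global} produces the global curve with its abstract alternatives, and \cite[Lemma~3.4]{chen2020global} refines the non-compactness alternative into the heteroclinic degeneracy \ref{gen hetero degeneracy} using the conserved quantity $\flowforce$ of Section~\ref{conjugate section}. The proof therefore reduces to verifying the hypotheses of those theorems for the anti-plane shear problem and rewriting the conclusions in our notation.

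I would first confirm the functional-analytic structure. By Section~\ref{preliminaries section}, $\F \colon \mathcal{U}_\infty \to \Yspace_\infty$ is real-analytic. At a solution $(u,\lambda)$ satisfying \eqref{spectral assumption}, the transversal limiting operators $\limL'_\pm(u,\lambda)$ are invertible Sturm-Liouville operators, so standard exponential-decay estimates for elliptic operators on a strip with hyperbolic limiting operators upgrade $\F_u(u,\lambda) \colon \Xspace_\infty \to \Yspace_\infty$ to a Fredholm operator of index $0$. Under the one-dimensional kernel condition \eqref{kernel assumption}, an analytic Lyapunov-Schmidt reduction produces a local real-analytic arc through $(u,\lambda)$, which directly yields part~\ref{gen analytic}. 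At the singular endpoint \eqref{local singular assumption}, passage through the bifurcation point is supplied by the small-amplitude theory of Theorem~\ref{small amplitude theorem}.

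Next I would assemble these local arcs into a maximal continuation by applying \cite[Theorem~1.2]{chen2020global}, yielding a global curve $\mathscr{K} \supset \mathscr{K}_\loc$ parameterized by $s \in (0,\infty)$ on which one of four behaviors must occur as $s \to \infty$: blowup~\ref{gen blowup alternative}, spectral degeneracy~\ref{gen ripples}, reconnection to $\mathscr{K}_\loc$, or translation-induced loss of compactness. To rule out reconnection and obtain~\ref{gen reconnect}, I would argue by contradiction: if $(u(s_n),\lambda(s_n)) \in \mathscr{K}_\loc$ for some $s_n \to \infty$, then by analyticity and uniqueness of the local parameterization $\mathscr{K}$ must form a closed analytic loop; but any such loop would have to return arbitrarily close to the singular endpoint where $\prineigenvalue^\pm(u_0,\lambda_0) = 0$, contradicting the strict negativity \eqref{spectral assumption} preserved along the continuation.

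The main obstacle is identifying the translation alternative with the heteroclinic degeneracy~\ref{gen hetero degeneracy}, which is the content of \cite[Lemma~3.4]{chen2020global}. Suppose $s_n \to \infty$ with $N(s_n)$ bounded and $\prineigenvalue^\pm(u(s_n),\lambda(s_n))$ bounded away from $0$, yet $(u(s_n),\lambda(s_n))$ has no convergent subsequence in $\mathcal{U}_\infty$. Schauder estimates and Arzel\`a-Ascoli supply a shift sequence $x_n \to \pm\infty$ for which $u(s_n)(\placeholder + x_n,\placeholder) \to u_*$ in $C^3_\loc$ for some monotone front $(u_*,\lambda_*) \in \genU_\infty$. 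Since $\flowforce(u(s_n),\lambda(s_n);x)$ is $x$-independent, passing to limits in $x$ on either side of $x_n$ and then in $n$ forces the three asymptotic states listed in~\ref{gen hetero degeneracy} to share a common value of $\flowforce$; the failure of precompactness ensures these states are pairwise distinct, so they are pairwise conjugate in the sense of \eqref{def conjugate flows}. Tracking which limit of $u_*$ matches which pre-translation limit of $u(s_n)$, based on the sign of $x_n$, is the most delicate bookkeeping, but it is precisely what \cite[Lemma~3.4]{chen2020global} encodes.
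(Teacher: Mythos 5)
Your approach matches the paper's: the paper offers no independent proof of Theorem~\ref{abstract global bifurcation theorem}, stating only that it corresponds to \cite[Theorem 1.2]{chen2020global} combined with \cite[Lemma 3.4]{chen2020global}, which is precisely the reduction you carry out, with your sketch of the Fredholm/Lyapunov--Schmidt machinery and of how the loss-of-compactness alternative is converted into heteroclinic degeneracy via the conserved quantity $\flowforce$ being a fair summary of what those cited results contain. (Minor caveat: the no-reconnection conclusion \ref{gen reconnect} is itself part of the cited abstract theorem rather than something to re-derive here, and your loop argument via proximity to the singular endpoint is not quite how it is established there, but this does not affect the overall correctness of deferring to \cite{chen2020global}.)
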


\subsection{Spectral non-degeneracy} \label{spectral non-degeneracy section}

The next result will allow us to confirm that the spectral non-degeneracy persists not just for perturbative solutions, but globally.  

\begin{lemma}[Global non-degeneracy] \label{trivial kernel lemma} Suppose that $(u,\lambda) \in \mathcal{U}_\infty$ is a strictly increasing monotone front.  Then, $\kernel{\limL_\pm^\prime(u,\lambda)}$ is trivial.
\end{lemma}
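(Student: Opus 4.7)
The plan is to reduce triviality of $\ker\limL'_\pm(u,\lambda)$ to the strict monotonicity of the period map already established in the proof of Proposition~\ref{conjugate flow proposition}. By Corollary~\ref{limiting corollary}, strict $x$-monotonicity of $u$ forces $\lambda>0$ and the limiting states to equal $U_\pm(\lambda)$ as constructed there. Since $u$ is odd in $x$, the two transversal operators coincide, so it suffices to analyze $\limL'_+(u,\lambda)$ at $U = U_+(\lambda)$. Because \eqref{ellipticity on W} makes $\limL'_+$ a regular Sturm--Liouville operator on $\Omega' = (-\tfrac\pi2,\tfrac\pi2)$ with Dirichlet data, its kernel is at most one-dimensional, and it is enough to rule out a Jacobi field that vanishes at both endpoints.

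Next, I would set up a shooting argument in the phase plane \eqref{U planar ODE}. For $\alpha$ close to $\alpha(\lambda) := U_{+y}(-\tfrac\pi2) = V_0(c_1(\lambda))$, let $U(\,\cdot\,;\alpha,\lambda)$ denote the solution of the interior ODE with $U(-\tfrac\pi2) = 0$ and $U_y(-\tfrac\pi2) = \alpha$. The Jacobi field $\psi_1(y) := \partial_\alpha U(y;\alpha(\lambda),\lambda)$ lies in the kernel of the linearized ODE and satisfies $\psi_1(-\tfrac\pi2) = 0$, $\psi_1'(-\tfrac\pi2) = 1$; by uniqueness of solutions to linear ODEs, it spans the space of Jacobi fields that vanish at the left endpoint. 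Hence $\ker\limL'_+(u,\lambda)$ is trivial if and only if $\psi_1(\tfrac\pi2) \neq 0$.

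To evaluate $\psi_1(\tfrac\pi2)$, I would use that the orbit through $(0,\alpha)$ carries energy $c(\alpha) := \strainW'(\alpha^2)\alpha^2 - \tfrac12\strainW(\alpha^2)$ and returns to the $V$-axis at $(0,-\alpha)$ after ``time'' $P(c(\alpha),\lambda)$, where $P$ is the period map from Proposition~\ref{conjugate flow proposition}. Consequently
\begin{equation*}
  U\bigl(-\tfrac\pi2 + P(c(\alpha),\lambda);\,\alpha,\lambda\bigr) = 0
\end{equation*}
for all $\alpha$ near $\alpha(\lambda)$. Differentiating in $\alpha$ at $\alpha = \alpha(\lambda)$ (where $P(c_1(\lambda),\lambda) = \pi$) and using $U_y(\tfrac\pi2) = -\alpha(\lambda)$ yields
\begin{equation*}
  \psi_1(\tfrac\pi2) \;=\; \alpha(\lambda)\, P_c(c_1(\lambda),\lambda)\, c'(\alpha(\lambda)),
\end{equation*}
which is strictly positive because $P_c > 0$ by \eqref{P monotone} and $c'(\alpha) = \alpha\bigl[\strainW'(\alpha^2) + 2\alpha^2\strainW''(\alpha^2)\bigr] > 0$ by \eqref{ellipticity on W}.

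The main obstacle, such as it is, is making sure the shooting parameterization lines up with the Sturm--Liouville formulation: once $\psi_1$ is identified as the unique (up to scale) left-vanishing Jacobi field, the kernel question reduces entirely to the sign of $\psi_1(\tfrac\pi2)$, and that sign is supplied by the monotonicity $P_c > 0$ of the period map that was already proved in the conjugate-flow analysis. Thus no ingredients beyond those in Proposition~\ref{conjugate flow proposition} should be required.
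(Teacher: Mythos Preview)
Your proposal is correct and takes essentially the same approach as the paper: both arguments introduce the Jacobi field $\psi_1 = \partial_\alpha U(\,\cdot\,;\alpha,\lambda)$ (the paper writes it as $\dot\Phi = \Phi_\mu$), reduce the kernel question to whether $\psi_1(\tfrac\pi2)\ne 0$, and then compute $\psi_1(\tfrac\pi2)$ by differentiating the identity $U\bigl(-\tfrac\pi2 + P(c(\alpha),\lambda);\alpha\bigr)=0$ to express it as a product involving $P_c$ and $c'(\alpha)=H_V(0,\alpha)$, both of which are positive by \eqref{P monotone} and \eqref{ellipticity on W}. Your notation is slightly different and your sign bookkeeping is in fact cleaner than the paper's, but the argument is the same.
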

\begin{proof}
  As the following argument does not involve the parameter $\lambda$, for simplicity of the presentation, we will suppress all dependence on it.  Also, as noted before, the transversal linearized problems at $x = \pm\infty$ are identical, so there is no need to append subscripts of $\pm$. In particular, we simply write $U$ for the limiting state $\lim_{x \to \pm\infty} u(x,\placeholder) = U_\pm(\lambda)$.

From \eqref{Conserved UV} we know that $U$ satisfies
\begin{equation}\label{Conserved U}
H(U, U_y) = \strainW'(U_y^2) U_y^2 - {1\over 2}\strainW(U_y^2) - \mathcal B(U) = \text{constant}.
\end{equation}
Moreover by \eqref{cond on b} and \eqref{sufficient ellipticity on W} we have
\begin{equation}\label{sign on Hderiv}
V H_V(U, V) > 0 \ \text{ for }\ V \ne 0.
\end{equation}

Let $\Phi = \Phi(y; \mu)$ be the unique solution to the initial value problem
\begin{equation}\label{ivp Phi}
\left\{ \begin{gathered}
\partial_y \big( \strainW'(\Phi_y^2) \Phi_y \big) - b(\Phi)  = 0  \quad  \textrm{in } \Omega^\prime, \\
\Phi(-\tfrac{\pi}{2}; \mu) = 0, \quad  \Phi_y(-\tfrac{\pi}{2}; \mu) = \mu.   
\end{gathered} \right.
\end{equation} 
Clearly $\Phi(y; U_{y}(-\tfrac{\pi}{2})) = U(y)$. Differentiating \eqref{ivp Phi} with respect to the parameter $\mu$ yields
\begin{equation}\label{ivp Phi_mu}
\left\{ \begin{gathered}
\partial_y \LB \LC \strainW'(U_{y}^2) + 2 U_{y}^2 \strainW''(U_{y}^2) \RC \dot\Phi_y \RB - b_\varkappa(U) \dot\Phi  = 0  \quad  \textrm{in } \Omega^\prime, \\
\dot\Phi(-\tfrac{\pi}{2}) = 0, \quad  \dot\Phi_y(-\tfrac{\pi}{2})  = 1,    
\end{gathered} \right.
\end{equation} 
  where $\dot \Phi(y) := \Phi_\mu(y; U_{y}(-\tfrac{\pi}{2}))$. If $\dot \Phi(\frac\pi 2) = 0$, then \eqref{ivp Phi_mu} is precisely the statement that $\dot \Phi \in \ker \limL'(u,\lambda)$. Since we are dealing with a boundary value problem for a linear second-order ODE, one can easily check that, conversely, $\ker\limL'(u, \lambda)$ is trivial whenever $\dot\Phi(\frac\pi 2) \ne 0$.

Recalling the definition of the period map $P$ in Section~\ref{conjugate section}, from \eqref{ivp Phi} it follows that
\begin{equation*}
\Phi(P(0, \mu); \mu) = 0.
\end{equation*}
Differentiating the above identity with respect to $\mu$ then gives
\begin{equation*}
\Phi_y \big(P(H(0,\mu); \mu) \big) P_c(H(0,\mu)) H_V(0, \mu) + \Phi_\mu \big( P(H(0,\mu)); \mu \big) = 0,
\end{equation*}
which at $y = \tfrac{\pi}{2}$ and $\mu = U_y(-\tfrac{\pi}{2})$ becomes
\begin{equation*}
U_y(\tfrac{\pi}{2}) P_c\big( H(0, U_y(-\tfrac{\pi}{2})) \big) H_V(0, U_y(-\tfrac{\pi}{2})) + \dot\Phi(\tfrac{\pi}{2}) = 0.
\end{equation*}
By \eqref{sign on Hderiv} and \eqref{P monotone}, the first term  above is negative.   Thus,  $\dot \Phi(\tfrac{\pi}{2}) < 0$, which proves that $\ker{\limL'}(u, \lambda)$ is trivial. 
\end{proof}

\subsection{Uniform regularity}

The purpose of this section is to derive a priori estimates on front solutions to the anti-plane shear equation that will eventually allow us to conclude that $\lambda$ is unbounded along the global curve. Following \cite[Chapter 7]{sperb1981maximum}, we first show that the divergence structure of \eqref{anti-plane shear equation} and the uniform ellipticity condition \eqref{ellipticity on W} imply that there is an auxiliary function $\mathcal{P}(u,|\nabla u|^2, \lambda)$ that obeys a maximum principle. By exploiting certain coercivity properties of $\mathcal{P}$, this will allow us to control $u$ in terms of $\lambda$.  

More precisely, define 
\begin{equation}\label{P function}
\mathcal{P}(\varkappa,q, \lambda) := 2q \strainW'(q) - \strainW(q) - 2 \mathcal{B}(\varkappa, \lambda). 
\end{equation}
An application of \cite[Theorem 7.1]{sperb1981maximum} gives
\begin{lemma}[Maximum principle for $\mathcal{P}$]\label{gradient bounds lemma}
Assume that \eqref{ellipticity on W} holds and let $(u,\lambda) \in \Xspace_\bdd \times \R$ be a solution to \eqref{anti-plane shear equation}. Then $\mathcal{P}(u,|\nabla u|^2,  \lambda)$ cannot achieve an interior maximum or minimum except at critical points of $u$. 
\end{lemma}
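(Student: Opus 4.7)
The proof follows the $P$-function method of Payne--Philippin, adapted by Sperb to quasilinear divergence-form equations, and is essentially the content of \cite[Theorem 7.1]{sperb1981maximum}. I outline the plan.

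First I would compute the gradient of $\mathcal{P}$ directly from the definition \eqref{P function}, yielding
\begin{equation*}
  \partial_i \mathcal{P} = 2A(q) u_k u_{ki} - 2 b(u,\lambda) u_i,
\end{equation*}
where $q := |\nabla u|^2$ and $A(q) := \strainW'(q) + 2q\strainW''(q)$, the latter being strictly positive by \eqref{ellipticity on W}. The natural candidate for an elliptic operator controlling $\mathcal{P}$ is the principal part of the Fr\'echet linearization of \eqref{anti-plane shear equation},
\begin{equation*}
  \mathfrak{L}\phi := a^{ij}(x)\phi_{ij}, \qquad a^{ij} := \strainW'(q)\delta^{ij} + 2\strainW''(q)u_i u_j,
\end{equation*}
whose eigenvalues are $\strainW'(q)$ and $A(q)$, both positive. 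Thus $\mathfrak{L}$ is uniformly elliptic on any subset of $\Omega_*:=\{x\in\Omega : \nabla u(x)\neq 0\}$ on which $q$ is bounded.

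The main step is to derive a differential identity for $\mathcal{P}$ on $\Omega_*$. Differentiating the PDE \eqref{anti-plane shear equation} once to eliminate third derivatives of $u$ in favor of $\nabla \mathcal{P}$ and $D^2 u$, and substituting into the expansion of $\mathfrak{L}\mathcal{P}$, I would establish a pointwise relation of the form
\begin{equation*}
  \mathfrak{L}\mathcal{P} + \gamma^i(x) \partial_i \mathcal{P} = 0 \qquad \text{on } \Omega_*,
\end{equation*}
with smooth coefficients $\gamma^i$. As a sanity check, at any critical point $x_0 \in \Omega_*$ of $\mathcal{P}$, the vanishing of $\nabla\mathcal{P}(x_0)$ together with the PDE pin down all second derivatives of $u$: in the adapted frame where $\nabla u(x_0) = \abs{\nabla u(x_0)}e_1$, one finds $u_{11}(x_0) = b/A$ and $u_{12}(x_0) = u_{22}(x_0) = 0$, and a direct manipulation using the $x_1$-derivative of \eqref{anti-plane shear equation} confirms $\mathfrak{L}\mathcal{P}(x_0) = 0$, consistent with the claimed identity.

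With the identity in hand, $\mathcal{P}$ is a solution of a homogeneous second-order linear elliptic equation on $\Omega_*$, so the strong maximum and minimum principles both apply: if $\mathcal{P}$ attained an interior extremum at some $x_0 \in \Omega_*$, it would be constant on the connected component of $\Omega_*$ containing $x_0$, and in particular no strict extremum is achieved at a non-critical point of $u$. The main obstacle is the verification of the global identity, which is a long but routine computation essentially carried out in \cite[Section 7]{sperb1981maximum}; I would omit the details in favor of that reference.
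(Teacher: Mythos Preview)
Your proposal is correct and aligns with the paper's approach: the paper does not give an independent proof but simply states the lemma as a direct application of \cite[Theorem~7.1]{sperb1981maximum}. Your outline of the $P$-function computation behind that reference is accurate and in fact supplies more detail than the paper itself.
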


Now, from \eqref{sufficient ellipticity on W} 
we see that 
\[
2q \strainW''(q) + \strainW'(q) > \strainW'(0) = 1 \qquad \text{for }\ q > 0.
\]
Integrating this leads to 
\begin{equation*}
2q \strainW'(q) - \strainW(q) > q \qquad \text{for }\ q > 0.
\end{equation*}
From the definition of $\mathcal{P}$ \eqref{P function} and negativity of $\mathcal{B}$ \eqref{cond on b}, we may then infer the bound
\begin{equation}
\mathcal{P}(u,|\nabla u|^2, \lambda) \ge |\nabla u|^2.
\label{P coercivity}
\end{equation}

Supposing now that $(u,\lambda)$ is a strictly monotone front, Lemma~\ref{gradient bounds lemma} implies that $\mathcal{P}(u, |\nabla u|^2, \lambda)$ is maximized at infinity.  Combining this with the above inequality and a bootstrapping argument, we obtain the following. 

\begin{theorem}[Uniform regularity] \label{uniform regularity theorem} Every monotone front solution $(u,\lambda) \in \mathcal{U}_\infty$ to \eqref{anti-plane shear equation} with $|\lambda| < \Lambda$ obeys the a priori bound
\[ \| u \|_{C^{3+\alpha}(\Omega)} < C\]
where the constant $C = C(\Lambda) > 0$.
\end{theorem}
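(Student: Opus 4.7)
The plan is to use Lemma~\ref{gradient bounds lemma} to bound $\mathcal{P}(u, |\nabla u|^2, \lambda)$ uniformly in terms of $\lambda$ alone, extract an $L^\infty$ bound on $\nabla u$ via the coercivity \eqref{P coercivity}, and then bootstrap using standard elliptic regularity. Since $u$ is strictly increasing with $\partial_x u > 0$ on $\Omega$, and $u$ is odd in $x$ with $u|_{\partial\Omega} = 0$, the only critical points of $u$ in $\overline{\Omega}$ are the two corners $(0, \pm \tfrac{\pi}{2})$; at these points $\mathcal{P} = 0$, which is the minimum value allowed by \eqref{P coercivity}. Lemma~\ref{gradient bounds lemma} then rules out any interior maximum of $\mathcal{P}$, so $\sup_\Omega \mathcal{P}$ is attained either on the lateral boundary $\{y = \pm \tfrac{\pi}{2}\}$ away from those corners, or in the limit $x \to \pm \infty$.

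I would exclude the boundary possibility via the Hopf boundary-point lemma. On $\partial\Omega$ the Dirichlet condition kills all tangential derivatives of $u$, and \eqref{b odd} gives $b(0,\lambda) = 0$, so evaluating \eqref{anti-plane shear equation} on the boundary yields $\bigl(\strainW'(u_y^2) + 2 u_y^2 \strainW''(u_y^2)\bigr) u_{yy} = 0$; by ellipticity \eqref{ellipticity on W}, $u_{yy} \equiv 0$ on $\partial\Omega$. A direct computation then gives $\partial_y \mathcal{P} = 4 u_y u_{yy} \bigl(\strainW'(u_y^2) + u_y^2 \strainW''(u_y^2)\bigr) = 0$ along $\partial\Omega$. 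The standard derivation of Lemma~\ref{gradient bounds lemma} (as in \cite[Chapter 7]{sperb1981maximum}) shows that $\mathcal{P}$ satisfies a linear elliptic inequality wherever $\nabla u \neq 0$, so at any boundary point $(x_0, \pm\tfrac{\pi}{2})$ with $x_0 \neq 0$ --- where $\nabla u = (0, u_y)$ is nonzero by Hopf applied to $u$ itself --- a boundary maximum would force $\partial_\nu \mathcal{P} > 0$, contradicting the vanishing just computed.

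Consequently, $\sup_\Omega \mathcal{P}$ is realized only in the limit $x \to \pm\infty$, where $\mathcal{P}(u, |\nabla u|^2, \lambda) \to \mathcal{P}(U_\pm(\lambda), |\partial_y U_\pm(\lambda)|^2, \lambda)$ uniformly in $y$. By Proposition~\ref{conjugate flow proposition} and the implicit-function-theorem arguments in its proof, $\lambda \mapsto U_\pm(\lambda)$ is continuous into $C^2([-\tfrac{\pi}{2}, \tfrac{\pi}{2}])$ on $(0,\infty)$, so this limiting quantity is uniformly bounded on $0 < \lambda < \Lambda$ by some $M = M(\Lambda)$. Combining with \eqref{P coercivity} gives $|\nabla u|^2 \leq M(\Lambda)$ throughout $\Omega$; the Dirichlet condition together with integration in $y$ also gives $\|u\|_{L^\infty} \leq \pi \sqrt{M(\Lambda)}$.

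Once $\|u\|_{W^{1,\infty}}$ is controlled, equation \eqref{anti-plane shear equation} becomes uniformly elliptic with real-analytic coefficients whose arguments range over a fixed compact set. Standard interior and boundary Schauder estimates, iterated after differentiating the PDE as in the remark following Theorem~\ref{main theorem}, upgrade this to $\|u\|_{C^{3+\alpha}(\overline{\Omega})} \leq C(\Lambda)$. The main obstacle is the boundary step: justifying Hopf's lemma for the quasilinear elliptic inequality underlying Lemma~\ref{gradient bounds lemma}, and checking that the two corner critical points --- where this inequality degenerates --- do not affect the conclusion.
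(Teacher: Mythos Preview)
Your strategy matches the paper's: use Lemma~\ref{gradient bounds lemma} together with the coercivity \eqref{P coercivity} to push the supremum of $\mathcal P$ out to $x=\pm\infty$, read off a $W^{1,\infty}$ bound from the limiting states $U_\pm(\lambda)$, and then bootstrap. Your treatment of the lateral boundary via the Hopf lemma is in fact more careful than the paper's, which simply asserts that for a strictly monotone front $\mathcal P$ is maximized at infinity and moves on; your computation that $\partial_\nu \mathcal P = 0$ along $\{y=\pm\tfrac\pi2\}$ (using $u_{yy}=0$ there) is exactly what is needed to justify that step.

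There is one genuine gap in your bootstrap. Once you have $\|u\|_{W^{1,\infty}} \le C(\Lambda)$, the coefficients of \eqref{anti-plane shear equation} viewed as a linear equation for $u$ are only known to be in $L^\infty$, not $C^\alpha$, so Schauder theory does not apply directly. The paper inserts an intermediate step: differentiate the equation so that $v=\partial_x u$ (and likewise $\partial_y u$) solves the divergence-form linear PDE \eqref{linearized PDE} with bounded measurable coefficients, and then invokes a De~Giorgi--Nash estimate (e.g.\ \cite[Corollary~9.29]{gilbarg2001elliptic}) on unit-width subdomains $\Omega_m$ to obtain a uniform $C^\alpha$ bound on $\nabla u$. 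Only after this does Schauder give $C^{2+\alpha}$, and a second pass gives $C^{3+\alpha}$. You should add this De~Giorgi--Nash step explicitly; ``standard Schauder estimates'' alone will not close the loop from $W^{1,\infty}$.
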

\begin{proof}
Throughout the course of the proof, let $C$ denote a generic positive constant depending only on $\Lambda$.  First, notice that because $u$ is a strictly monotone front, it is maximized and minimized only at infinity.  Thus, by Corollary~\ref{limiting corollary},
  \begin{equation}
    \| u \|_{C^0(\Omega)} = \sup_{\Omega^\prime}{|U_\pm(\lambda)|} \leq C.\label{C0 bound} 
  \end{equation}
On the other hand, thanks to Lemma~\ref{gradient bounds lemma} and \eqref{P coercivity}, we have
\[ \| \nabla u \|_{C^0(\Omega)} \leq \| \mathcal{P}(u, |\nabla u|^2, \lambda) \|_{C^0(\Omega)} = \sup_{\Omega^\prime} \mathcal{P}(U_\pm(\lambda), |\partial_y U_\pm(\lambda)|^2,\lambda) \leq C. \label{C1 bound} \]
Together with \eqref{C0 bound}, this gives control of $\| u \|_{C^1}$.

To upgrade this to the full $C^{3+\alpha}$ norm, we make use of a familiar elliptic regularity argument.  Observe that $\partial_x u$ can be thought of as $W^{2,\infty}(\Omega)$ strong solution of  \eqref{linearized PDE}.  By the previous paragraph, the coefficients of this PDE are bounded uniformly in $L^\infty(\Omega)$ in terms of $\Lambda$.  Using the De Giorgi--Nash-type estimate \cite[Corollary 9.29]{gilbarg2001elliptic}, we find that
\[ \| \partial_x u \|_{C^\alpha(\Omega_m)} \leq C \| \partial_x u \|_{C^0(\Omega)} < C,\]
for any $m \in \mathbb{Z}$, where $\Omega_m := (m, m+1) \times \Omega^\prime$.  Note that the constant above is independent of $m$, and so this implies further that $\| \partial_x u\|_{C^\alpha(\Omega)} < C$. 
The same reasoning gives an equivalent bound for $\partial_y u$, and hence $\| u \|_{C^{1+\alpha}} < C$.

Now, we can simply apply linear Schauder theory to \eqref{anti-plane shear equation} to see that 
\[ \| u \|_{C^{2+\alpha}(\Omega)} \leq C.\]
Bootstrapping the above argument once more leads to the desired $C^{3+\alpha}$ bound, completing the proof.
\end{proof}

\begin{lemma}[Deformation gradient blowup] \label{uy blowup lemma} It holds that 
\[ \| \partial_y U_\pm(\lambda) \|_{C^0} \longrightarrow \infty \qquad \textup{as } \lambda \to \infty.\]
\end{lemma}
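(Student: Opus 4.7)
My plan is to argue by contradiction using the time-of-flight formula for $U_+(\lambda)$ from the proof of Proposition~\ref{conjugate flow proposition}. Writing $G(q) := q\strainW'(q) - \tfrac{1}{2}\strainW(q)$ and $M(\lambda) := U_+(\lambda)(0)$, the conservation law behind \eqref{Conserved UV} gives $G(U_y^2) = \mathcal B(U,\lambda) - \mathcal B(M,\lambda)$ along the orbit, and since $\mathcal B(\placeholder,\lambda)$ is maximized on $[0,M]$ at $0$, the maximum of $|\partial_y U_+|$ is attained on the boundary; call it $V_0(\lambda)$, so that $V_0(\lambda) = \|\partial_y U_\pm(\lambda)\|_{C^0}$. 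Because $b(U_+,\lambda) \le 0$ when $U_+ \ge 0$, the ODE \eqref{U ODE} forces $U_+$ to be concave on $[-\tfrac\pi2,\tfrac\pi2]$, and hence $M(\lambda) \le (\pi/2)V_0(\lambda)$. The assertion is therefore equivalent to showing $V_0(\lambda) \to \infty$.

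Suppose for contradiction $V_0(\lambda_n) \le C$ along some $\lambda_n \to \infty$; then $M_n := M(\lambda_n) \le \pi C/2$. The argument rests on two ingredients. First, the enhanced ellipticity \eqref{sufficient ellipticity on W} forces $G(q) \ge q/2$, so $G^{-1}$ is a smooth bijection on $[0,G(C^2)]$ with $(G^{-1})'(0)=2$, and there exists $\gamma=\gamma(C)>0$ with $G^{-1}(s) \ge \gamma s$ on that interval. Second, the convexity of $b(\placeholder,\lambda)$ in \eqref{cond on b} together with $b(0,\lambda)=0$ implies that $\varkappa \mapsto |b(\varkappa,\lambda)|/\varkappa$ is nonincreasing on $(0,\infty)$, and integrating the resulting secant inequality $|b(\varkappa,\lambda)| \ge (|b(M,\lambda)|/M)\varkappa$ from $U$ to $M$ yields $\mathcal B(U,\lambda) - \mathcal B(M,\lambda) \ge \tfrac 12 (|b(M,\lambda)|/M)(M^2-U^2)$ for $U\in[0,M]$.

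Combining these in the time-of-flight identity
\begin{equation*}
\frac{\pi}{2} \;=\; \int_0^M \frac{dU}{\sqrt{G^{-1}\bigl(\mathcal B(U,\lambda) - \mathcal B(M,\lambda)\bigr)}},
\end{equation*}
which records the traversal from $(M,0)$ to $(0,-V_0)$ in the $(U,U_y)$ phase portrait and is equivalent to $P(c_1(\lambda),\lambda)=\pi$ as expressed in \eqref{def period map}, and then evaluating the elementary arcsine integral $\int_0^M dU/\sqrt{M^2-U^2} = \pi/2$ will give $\pi/2 \le (\pi/2)\sqrt{2M_n/(\gamma |b(M_n,\lambda_n)|)}$, which rearranges to $|b(M_n,\lambda_n)|/M_n \le 2/\gamma$. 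I then pick any fixed $\varkappa^\ast > \pi C/2 \ge M_n$ and apply the monotonicity of $|b(\placeholder,\lambda)|/\varkappa$ once more to get
\begin{equation*}
\frac{|b(\varkappa^\ast,\lambda_n)|}{\varkappa^\ast} \;\le\; \frac{|b(M_n,\lambda_n)|}{M_n} \;\le\; \frac{2}{\gamma},
\end{equation*}
so $|b(\varkappa^\ast,\lambda_n)|$ is uniformly bounded in $n$. This contradicts the hypothesis in \eqref{cond on b} that $b(\varkappa^\ast,\placeholder)$ is unbounded on $[0,\infty)$.

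The main obstacle is obtaining a useful upper bound on the period. The symmetric estimates $G^{-1}(s) \le 2s$ and $|b(\varkappa,\lambda)| \le |b_\varkappa(0,\lambda)|\varkappa$ (the latter again a consequence of convexity) only produce a lower bound of the form $P \ge \pi/\sqrt{|b_\varkappa(0,\lambda)|}$ on the period, which is compatible with $P=\pi$ and therefore yields no information. What makes the proof work is the asymmetric lower bound on $V$ coming from the concavity secant inequality for $|b|$; this quantifies how the nonlinearity must ``slow'' the orbit so that it reaches $U=0$ in exactly time $\pi/2$, and is precisely what is incompatible with both $V_0$ bounded and $\lambda \to \infty$.
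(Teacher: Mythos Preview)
Your argument is correct but takes a genuinely different route from the paper. The paper does not argue by contradiction or invoke the time-of-flight identity at all. Instead it first shows directly that $M(\lambda) := U_+(\lambda)(0)$ is strictly increasing in $\lambda$, by applying the maximum principle to the linearized equation solved by $\partial_\lambda U_+$ and using the spectral nondegeneracy from Lemmas~\ref{spectral nondegeneracy lemma} and~\ref{trivial kernel lemma}. Then it simply evaluates the conserved energy $H$ at the two distinguished points $y=0$ and $y=\tfrac\pi2$: at the midpoint $c(\lambda) = -\mathcal B(M(\lambda),\lambda)$, while at the boundary $c(\lambda) = G(V_0(\lambda)^2)$ in your notation. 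Fixing $\bar\lambda>0$, monotonicity of $M$ together with monotonicity of $-\mathcal B$ in $\varkappa$ yields $c(\lambda) \ge -\mathcal B(M(\bar\lambda),\lambda)$ for $\lambda > \bar\lambda$, and the unboundedness of $b(M(\bar\lambda),\placeholder)$ then forces $c(\lambda)\to\infty$, hence $V_0(\lambda)\to\infty$.

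Your approach bypasses the monotonicity of $M(\lambda)$ entirely, and with it the appeal to the principal-eigenvalue machinery from Sections~\ref{small-amplitude section} and~\ref{spectral non-degeneracy section}; in its place you use the elementary concavity bound $M \le (\pi/2)V_0$ and a quantitative period estimate built on the convexity of $b(\placeholder,\lambda)$. The trade-off is that your proof is more self-contained but longer, and it leans more heavily on the convexity hypothesis in \eqref{cond on b}, whereas the paper's proof uses that hypothesis only indirectly through the earlier lemmas. One small expositional point: your sentence ``enhanced ellipticity forces $G(q)\ge q/2$, so \ldots\ there exists $\gamma$ with $G^{-1}(s)\ge \gamma s$'' reads as though the lower bound on $G^{-1}$ follows from $G(q)\ge q/2$, which would go the wrong way; the lower bound actually comes from the smoothness of $G^{-1}$ on the compact interval $[0,G(C^2)]$ together with $(G^{-1})'(0)=2$, as you in fact indicate.
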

\begin{proof}
First, we observe that $\lambda \mapsto U_+(\lambda)|_{y=0}$ is strictly increasing.  This simply follows from the fact that  $\dot U_+ := \partial_\lambda U_+(\lambda)$ solves 
\[ \limL^\prime(U_+(\lambda),\lambda) \dot U_+ =  -b_\lambda(U_+(\lambda),\lambda)\]
and vanishes on $\partial \Omega^\prime$.  In view of \eqref{cond on b}, the right-hand side above is strictly positive.  On the other hand, Lemma~\ref{spectral nondegeneracy lemma} and Lemma~\ref{trivial kernel lemma} ensure that the principal eigenvalue of $\limL^\prime(U_+(\lambda),\lambda)$ is strictly negative for all $\lambda > 0$.  We may therefore apply the maximum principle to conclude that $\dot U_+ > 0$.   

Next, we recall that the equation satisfied by $U_+$ can be rewritten as the planar system \eqref{U planar ODE} which has the first integral $H$ given by \eqref{Conserved UV}.  Since $U_+(\lambda)$ is odd, writing $V_+(\lambda) := U_{y+}(\lambda)$, we have that
\[ c(\lambda) = H(U_+(\lambda), V_{+}(\lambda)) = -\mathcal{B}(U_+(\lambda), \lambda)|_{y =0} > -\mathcal{B}(U_+(\bar\lambda), \lambda)|_{y = 0}\]
for $0 < \bar\lambda < \lambda$ by \eqref{cond on b}.  Fixing a $ \bar\lambda > 0$, the unboundedness of $b(\varkappa, \placeholder)$ implies that  $c(\lambda) \to \infty$ as $\lambda \to \infty$.  On the other hand, evaluating $H$ at the top of the domain reveals that
\[ c(\lambda) = \left( \strainW^\prime(V_+(\lambda)^2) V_+(\lambda)^2 - \frac{1}{2} \strainW(V_+(\lambda)^2)\right)\Big|_{y = \frac{\pi}{2}}. \]
Thus,  $\| \partial_y U_{+}(\lambda) \|_{C^0} \to \infty$ as $\lambda \to \infty$, which completes the proof.
\end{proof}

\subsection{Proof of the main result}

Finally, we turn to Theorem~\ref{main theorem}.  Recall that we have constructed a local curve $\cm_\loc$ of small-amplitude strictly increasing fronts in Section~\ref{small-amplitude section}.

\begin{proof}[Proof of Theorem~\ref{main theorem}]  We have already verified in Theorem~\ref{small amplitude theorem}\ref{kernel part} that the kernel condition \eqref{kernel assumption} holds along $\cm_\loc$.   Lemma~\ref{spectral nondegeneracy lemma}, moreover, implies that it satisfies the spectral assumption \eqref{spectral assumption}.   By construction, $\cm_\loc$ bifurcates from $(0,0)$, and hence \eqref{local singular assumption} holds.  Lastly, in view of Corollary~\ref{local nodal corollary}, the solutions on $\cm_\loc$ are strictly monotone increasing and exhibit the nodal properties \eqref{increasing nodal properties}.   

We are therefore justified in applying Theorem~\ref{abstract global bifurcation theorem} with $\mathscr{K}_\loc = \cm_\loc$, furnishing a global bifurcation curve $\cm$.  The symmetry properties claimed in part~\ref{anti-plane monotone part} are encoded in the definition of the spaces \eqref{infinity spaces definition}, while the monotonicity \eqref{monotonicity} follows from the fact that the fronts in  $\cm_\loc$ are strictly increasing and Theorem~\ref{nodal properties theorem}.  The local real analyticity of $\cm$ asserted in part~\ref{anti-plane analytic part} is likewise a direct consequence of Theorem~\ref{abstract global bifurcation theorem}\ref{gen analytic}.

  Consider now the limiting behavior along $\cm$.  Suppose for the sake of contradiction that \ref{gen hetero degeneracy} occurs, and without loss of generality assume that the sequence $x_n \to +\infty$. By the monotonicity properties in \eqref{monotonicity}, the limiting front $(u_*,\lambda_*)$ has $u_* \ge 0$ in $\Omega$. Thus its limiting states
  \begin{align*}
    \lim_{x \to \pm\infty} u_*(x,\placeholder) \ge 0.
  \end{align*}
  By Proposition~\ref{conjugate flow proposition}\ref{conjugate positive part}, these states must therefore be either $U_+(\lambda_*)$ or $0$. However, the three limiting states
  \begin{align*}
    \lim_{x \to -\infty} u_*(x,\placeholder)
    \quad\text{and}\quad
    \lim_{n \to \infty} \lim_{x \to \pm\infty}u(s_n)(x,\placeholder)
    = U_\pm(\lambda_*),
  \end{align*}
  are distinct and pairwise conjugate, and so this is impossible in view of Proposition~\ref{conjugate flow proposition}\ref{conjugate uniqueness part}.
  
  Reconnection to the trivial solution $(0,0)$ is ruled out by Theorem~\ref{small amplitude theorem}\ref{uniqueness part} and Theorem~\ref{abstract global bifurcation theorem}\ref{gen reconnect}. We claim further that the spectral degeneracy alternative \ref{gen ripples} can only happen in conjunction with blowup of $\lambda$ as in \eqref{blowup alternative}.   To see this, note that because the curve does not reconnect, if $\lambda$ is uniformly bounded along it, then spectral degeneracy would imply there exists $0 < \lambda_* < \infty$ for which the kernel of $\F_U^\prime(U_+(\lambda_*), \lambda_*)$ is nontrivial.  But this is impossible due to Lemma~\ref{trivial kernel lemma}. 

Thus we are left only with blowup as in \eqref{gen global blowup}.  On the other hand, $\liminf_{s \to \infty} \lambda(s) > 0$ by the previous paragraph, and so Theorem~\ref{uniform regularity theorem} ensures that $\lambda(s) \to \infty$.  Finally, Lemma~\ref{uy blowup lemma} implies that this also leads to blowup in the deformation gradient:
\[ \| \partial_y u(s) \|_{C^0(\Omega)} \geq \sup{|U_{y\pm}(\lambda(s))|} \longrightarrow \infty \qquad \textrm{as } s \to \infty,\]
completing the proof of part~\ref{anti-plane blowup part}.
\end{proof}

\section*{Acknowledgements} 

The research of RMC is supported in part by the NSF through DMS-1907584.  The research of SW is supported in part by the NSF through DMS-1812436.

\bibliographystyle{siam}
\bibliography{projectdescription}

\end{document}